\numberwithin{equation}{section}
\begin{document}

\newtheorem{thm}{Theorem}[section]
\newtheorem{prop}[thm]{Proposition}
\newtheorem{lem}[thm]{Lemma}
\newtheorem{cor}[thm]{Corollary}
\newtheorem{rem}[thm]{Remark}
\newtheorem*{defn}{Definition}

\newcommand{\DD}{\mathbb{D}}
\newcommand{\NN}{\mathbb{N}}
\newcommand{\ZZ}{\mathbb{Z}}
\newcommand{\QQ}{\mathbb{Q}}
\newcommand{\RR}{\mathbb{R}}
\newcommand{\CC}{\mathbb{C}}
\renewcommand{\SS}{\mathbb{S}}

\renewcommand{\theequation}{\arabic{section}.\arabic{equation}}

\newcommand{\supp}{\mathop{\mathrm{supp}}}    

\newcommand{\re}{\mathop{\mathrm{Re}}}   
\newcommand{\im}{\mathop{\mathrm{Im}}}   
\newcommand{\dist}{\mathop{\mathrm{dist}}}  
\newcommand{\link}{\mathop{\circ\kern-.35em -}}
\newcommand{\spn}{\mathop{\mathrm{span}}}   
\newcommand{\ind}{\mathop{\mathrm{ind}}}   
\newcommand{\rank}{\mathop{\mathrm{rank}}}   
\newcommand{\Fix}{\mathop{\mathrm{Fix}}}   
\newcommand{\codim}{\mathop{\mathrm{codim}}}   
\newcommand{\conv}{\mathop{\mathrm{conv}}}   
\newcommand{\epsi}{\mbox{$\varepsilon$}}
\newcommand{\eps}{\mathchoice{\epsi}{\epsi}
{\mbox{\scriptsize\epsi}}{\mbox{\tiny\epsi}}}
\newcommand{\cl}{\overline}
\newcommand{\pa}{\partial}
\newcommand{\ve}{\varepsilon}
\newcommand{\zi}{\zeta}
\newcommand{\Si}{\Sigma}
\newcommand{\cA}{{\mathcal A}}
\newcommand{\cG}{{\mathcal G}}
\newcommand{\cH}{{\mathcal H}}
\newcommand{\cI}{{\mathcal I}}
\newcommand{\cJ}{{\mathcal J}}
\newcommand{\cK}{{\mathcal K}}
\newcommand{\cL}{{\mathcal L}}
\newcommand{\cN}{{\mathcal N}}
\newcommand{\cR}{{\mathcal R}}
\newcommand{\cS}{{\mathcal S}}
\newcommand{\cT}{{\mathcal T}}
\newcommand{\cU}{{\mathcal U}}
\newcommand{\OM}{\Omega}
\newcommand{\B}{\bullet}
\newcommand{\ol}{\overline}
\newcommand{\ul}{\underline}
\newcommand{\vp}{\varphi}
\newcommand{\AC}{\mathop{\mathrm{AC}}}   
\newcommand{\Lip}{\mathop{\mathrm{Lip}}}   
\newcommand{\es}{\mathop{\mathrm{esssup}}}   
\newcommand{\les}{\mathop{\mathrm{les}}}   
\newcommand{\nid}{\noindent}
\newcommand{\pzr}{\phi^0_R}
\newcommand{\pir}{\phi^\infty_R}
\newcommand{\psr}{\phi^*_R}
\newcommand{\pow}{\frac{N}{N-1}}
\newcommand{\ncl}{\mathop{\mathrm{nc-lim}}}   
\newcommand{\nvl}{\mathop{\mathrm{nv-lim}}}  
\newcommand{\la}{\lambda}
\newcommand{\La}{\Lambda}    
\newcommand{\de}{\delta}    
\newcommand{\fhi}{\varphi} 
\newcommand{\ga}{\gamma}    
\newcommand{\ka}{\kappa}   

\newcommand{\core}{\heartsuit}
\newcommand{\diam}{\mathrm{diam}}

\newcommand{\lan}{\langle}
\newcommand{\ran}{\rangle}
\newcommand{\tr}{\mathop{\mathrm{tr}}}
\newcommand{\diag}{\mathop{\mathrm{diag}}}
\newcommand{\dv}{\mathop{\mathrm{div}}}

\newcommand{\al}{\alpha}
\newcommand{\be}{\beta}
\newcommand{\Om}{\Omega}
\newcommand{\na}{\nabla}

\newcommand{\cC}{\mathcal{C}}
\newcommand{\cM}{\mathcal{M}}
\newcommand{\nr}{\Vert}
\newcommand{\De}{\Delta}
\newcommand{\cX}{\mathcal{X}}
\newcommand{\cP}{\mathcal{P}}
\newcommand{\om}{\omega}
\newcommand{\si}{\sigma}
\newcommand{\te}{\theta}
\newcommand{\Ga}{\Gamma}

\title[Stability for Serrin's problem and Alexandrov's SBT]{Nearly optimal stability for Serrin's problem and the Soap Bubble theorem}

\author{Rolando Magnanini} 
\address{Dipartimento di Matematica ed Informatica ``U.~Dini'',
Universit\` a di Firenze, viale Morgagni 67/A, 50134 Firenze, Italy.}
    \email{magnanini@unifi.it}
    \urladdr{http://web.math.unifi.it/users/magnanin}

\author{Giorgio Poggesi}
\address{Department of Mathematics and Statistics, The University of Western Australia, 35 Stirling Highway, Crawley, Perth, WA 6009, Australia}
    \email{giorgio.poggesi@uwa.edu.au}

%
%


\begin{abstract}
We present new quantitative estimates for the radially symmetric configuration concerning Serrin's overdetermined problem for the torsional rigidity, Alexandrov's Soap Bubble Theorem, and other related problems. The new estimates improve on those obtained in \cite{MP, MP2} and are in some cases optimal.
\end{abstract}

\keywords{Serrin's overdetermined problem, Alexandrov Soap Bubble Theorem, torsional rigidity, constant mean curvature, integral identities, stability, quantitative estimates}
\subjclass{Primary 35N25, 53A10, 35B35; Secondary 35A23}

\maketitle

\raggedbottom

\section{Introduction}
Serrin's symmetry result for the torsional rigidity (\cite{Se}) states that  the overdetermined boundary value problem 
\begin{eqnarray}
\label{serrin1}
&\De u=N \ \mbox{ in } \ \Om, \quad u=0 \ \mbox{ on } \ \Ga, \\
\label{serrin2}
&u_\nu=R \ \mbox{ on } \ \Ga, 
\end{eqnarray}
admits a solution 
for some positive constant $R$ if and only if $\Om$ is a ball of radius $R$ and, up to translations, $u(x)=(|x|^2-R^2)/2$. Here, $\Om$ denotes a bounded domain in $\RR^N$, $N\ge 2$, with sufficiently smooth boundary $\Ga$, say $C^2$, and $u_\nu$ is the outward normal derivative of $u$ on $\Ga$. 

Alexandrov's Soap Bubble Theorem (\cite{Al1}, \cite{Al2}) states that if the mean curvature $H$ of a compact hypersurface $\Ga$ embedded in $\RR^N$ is constant, then $\Ga$ must be a sphere.

In the present paper we consider the stability issue for those two pioneering symmetry theorems.
Technically speaking, we will find two concentric balls $B_{\rho_i} (z)$ and $B_{\rho_e} (z)$, centered at $z \in \Om$ with radii $\rho_i$ and $\rho_e$, such that
%
%
\begin{equation*}
B_{\rho_i} (z) \subseteq \Om \subseteq B_{\rho_e} (z)
\quad \mbox{and} \quad
\rho_e-\rho_i\le \psi(\eta),
\end{equation*}
where
$\psi:[0,\infty)\to[0,\infty)$ is a continuous function vanishing at $0$ and $\eta$ is a suitable measure of the deviation of $u_\nu$ or $H$ from being a constant.
The landmark results of the present paper are the following new stability estimates:
\begin{equation}\label{intro:eq:Improved-Serrin-stability}
\rho_e-\rho_i\le C \, \nr u_\nu - R \nr_{2,\Ga}^{\tau_N} 
\end{equation}
and
\begin{equation}
\label{intro:eq:SBT-improved-stability}
\rho_e-\rho_i\le C\,\nr H_0-H\nr_{2,\Ga}^{\tau_N} .
\end{equation}

In \eqref{intro:eq:Improved-Serrin-stability} (see Theorem \ref{thm:Improved-Serrin-stability} for details), $\tau_2 = 1$, $\tau_3$ is arbitrarily close to one,  
%
%
and $\tau_N = 2/(N-1)$ for $N \ge 4$.
In \eqref{intro:eq:SBT-improved-stability} (see Theorem \ref{thm:SBT-improved-stability} for details), $\tau_N=1$ for $N=2, 3$, $\tau_4$ is arbitrarily close to one, and
%
%
$\tau_N = 2/(N-2) $ for $N\ge 5$.

The constants $C$ depend on the dimension $N$, the diameter $d_\Om$, and the radii $r_i$, $r_e$ of the uniform interior and exterior sphere conditions. 
The dependence on $r_e$ can be removed when $\Ga$ is mean convex.
%
%
%
%
%
%
%

The new estimate \eqref{intro:eq:Improved-Serrin-stability} improves (for every $N \ge 2$) on \cite[Theorem 1.1]{MP2} -- where \eqref{intro:eq:Improved-Serrin-stability} was obtained with $\tau_N= 2/(N+2)$, for every $N \ge 2$ -- to the extent that it gains the (optimal) Lipschitz stability in the case $N=2$.
The estimates obtained in \cite{MP2} were already better than those obtained previously in \cite{ABR, CMV, BNST}. Optimal stability for Serrin's problem has been obtained in \cite{Fe}, but based on a weaker measure of closeness to spherical symmetry. A more detailed overview and comparison of those results can be found in \cite{MP2, Mag, PogTesi}.

%
%

The new estimate \eqref{intro:eq:SBT-improved-stability} 
%
%
improves (for every $N \ge 4$) on \cite[Theorem 1.2]{MP2}, where \eqref{intro:eq:SBT-improved-stability} was obtained with $\tau_N=1$ for $N=2,3$, and $\tau_N=2/(N+2)$ for $N \ge 4$.
If we compare the exponents in \eqref{intro:eq:SBT-improved-stability} 
%
%
to those obtained in \cite[Theorem 1.2]{MP2}, we notice that the dependence of $\tau_N$ on $N$ has become virtually continuous, in the sense that $\tau_N\to 1$, if $N$ ``approaches'' $4$ from below or from above.
%
%
\par
We refer to \cite{MP, MP2, Mag, PogTesi} for a more detailed overview on other stability results present in the literature for Alexandrov's theorem.
%
%
Here, we only comment and compare the optimal Lipschitz stability (i.e. with $\tau_N=1$) for Alexandrov's theorem obtained in general dimension in \cite{CV}, \cite[Theorem 4.6]{MP2}, and \cite{KM}. \cite[Theorem 4.6]{MP2} is based on the same weaker measure of closeness to spherical symmetry used in \cite{Fe} for Serrin's problem. \cite{CV} and \cite[Theorem 1.9]{KM} assume the stronger uniform measure for the deviation of $H$ from being constant. \cite[Theorem 1.10]{KM} holds for surfaces that are $C^1$-small normal deformations of spheres. 
\par
At present, we do not know if the estimate \eqref{intro:eq:SBT-improved-stability} (or \eqref{intro:eq:Improved-Serrin-stability}) is optimal in general dimension. In fact, even if the exponent $1$ obtained in low dimensions is optimal --- as can be verified by direct computations for ellipsoids --- a proof of optimality (or of non-optimality) in higher dimensions is still elusive.

%
%
%

In this paper we will also consider weaker 
%
%
deviations (see Theorems \ref{thm:Serrin-stability} and \ref{thm:SBT-stability}).
Theorem \ref{thm:Serrin-stability} provides the inequality
\begin{equation}
\label{stability-Serrin}
\rho_e-\rho_i\le C\,\nr u_\nu-R\nr_{1,\Ga}^{\tau_N/2},
\end{equation}
where $\tau_N$ is the same appearing in \eqref{intro:eq:Improved-Serrin-stability}.
Also this inequality is new and refines one stated in \cite[Theorem 3.6]{MP2}
in which $\tau_N /2$ was replaced by $1/(N+2)$.

%
%
%
%
In Theorem \ref{thm:SBT-stability} we prove the inequality
\begin{equation}
\label{the-estimate}
\rho_e-\rho_i\le C\,\left\{\int_{\Ga}(H_0-H)^+\,dS_x\right\}^{\tau_N /2} , 
\end{equation}
where $\tau_N$ is the same appearing in \eqref{intro:eq:SBT-improved-stability}.
The last estimate improves (for every $N \ge 4$) on that obtained in \cite[Theorem 4.1]{MP}, where \eqref{the-estimate} was obtained with $\tau_N=1$ for $N=2,3$, and $\tau_N=2/(N+2)$ for $N \ge 4$.

%
%

%
%
All the aforementioned estimates are based on two integral identities obtained in \cite{MP,MP2}:
\begin{equation}
\label{idwps}
\int_{\Om} (-u) \left\{ |\na ^2 u|^2- \frac{ (\De u)^2}{N} \right\} dx=
\frac{1}{2}\,\int_\Ga \left( u_\nu^2- R^2\right) (u_\nu-q_\nu)\,dS_x
\end{equation}
and
\begin{multline}
\label{H-fundamental}
\frac1{N-1}\int_{\Om} \left\{ |\na ^2 u|^2-\frac{(\De u)^2}{N}\right\}dx+
\frac1{R}\,\int_\Ga (u_\nu-R)^2 dS_x = \\ 
\int_{\Ga}(H_0-H)\, (u_\nu)^2 dS_x.
\end{multline}
Here, $R$ and $H_0$ are reference constants given by
\begin{equation}
\label{def-R-H0}
R=\frac{N\,|\Om|}{|\Ga|}, \quad H_0=\frac1{R}=\frac{|\Ga|}{N\,|\Om|} ,
\end{equation}
and $q$ is a quadratic polynomial of the form
\begin{equation}
\label{quadratic}
q(x)=\frac12\, (|x-z|^2-a),
\end{equation}
for some choice of $z\in\RR^N$ and $a\in\RR$.

Identities \eqref{idwps} and \eqref{H-fundamental} hold regardless of how the point $z$ or the constant $a$ are chosen.
%
%
Identity \eqref{idwps}, 
%
%
proved in \cite[Theorem 2.1]{MP2}, puts together and refines Weinberger's argument for symmetry \cite{We} and some remarks of 
%
%
Payne and 
%
%
Schaefer \cite{PS}. Identity \eqref{H-fundamental} was proved in \cite[Theorem 2.2]{MP} by polishing the arguments contained in \cite{Re1} (see also \cite{Re2}).

\par 

The term in the braces in \eqref{idwps} and \eqref{H-fundamental}, that we call Cauchy-Schwarz deficit, plays the role of spherical detector. In fact, by Cauchy-Schwarz inequality, we have that
\begin{equation}
\label{eq:intronewton}
(\De u)^2\le N\,|\na^2 u|^2 \quad \text{in } \Om,
\end{equation}
and the equality sign is identically attained in $\Om$ if and only if 
$u$ is a quadratic polynomial of the form \eqref{quadratic}, and hence $\Ga$ is a sphere centered at $z$, by the boundary condition in \eqref{serrin1}.

It is thus evident that each of the two identities gives spherical symmetry if respectively $u_\nu=R$ or $H=H_0$ on $\Ga$, since Newton's inequality \eqref{eq:intronewton} holds with the equality sign (notice that in \eqref{idwps} $-u>0$ by the strong maximum principle). The same conclusion is also achieved if we only assume that $u_\nu$ or $H$ are constant on $\Ga$, since those constants must equal $R$ and $H_0$ by the identities
\begin{equation*}
\int_\Ga u_\nu\,dS_x=N\,|\Om| \quad \text{and} \quad \int_\Ga H\,q_\nu\,dS_x=|\Ga|.
\end{equation*}
%
%

Thus, \eqref{idwps} and \eqref{H-fundamental} give new elegant proofs of Alexandrov's and Serrin's results.
Moreover, they lead to several advantages and generalizations that have been discussed in \cite{MP, MP2, Pog, PogTesi, Mag}.
The greatest benefits yielded by \eqref{idwps} and \eqref{H-fundamental} are undoubtedly the optimal or quasi-optimal stability results for the Soap Bubble Theorem, Serrin's problem, and other related overdetermined problems.


\par
In order to prove \eqref{intro:eq:Improved-Serrin-stability} and \eqref{intro:eq:SBT-improved-stability}, the harmonic function $h=q-u $ plays a key role. This fact becomes visible when we observe that
\begin{equation}\label{L2-norm-hessian}
|\na ^2 u|^2-\frac{(\De u)^2}{N} = |\na^2 h|^2 
\end{equation}
and, if we choose $z$ in $\Om$,
\begin{equation}\label{oscillation}
\max_{\Ga} h-\min_{\Ga} h=\frac12\,(\rho_e^2-\rho_i^2)\ge \frac{r_i}{2} \, (\rho_e-\rho_i).
\end{equation}
In \eqref{oscillation} we used that $h=q$ on $\Ga$.

%
%
%

Now, since \eqref{idwps} and \eqref{H-fundamental} hold regardless of the choice of the parameters $z$ and $a$ defining $q$, we will thus complete the first step of our argument
%
%
by choosing $z\in\Om$ in a way such that the oscillation of $h$ on $\Ga$ is bounded in terms of the integrals
\begin{equation}\label{eq:introintegrals}
\int_{\Om} \de_\Ga \, |\na ^2 h|^2 \, dx \quad \mbox{or} \quad \frac1{N-1}\int_{\Om} |\na ^2 h|^2 dx ,
\end{equation}
where $\de_\Ga$ denotes the distance to $\Ga$.
In fact, we know that the factor $-u$ appearing in the left-hand side of \eqref{idwps} can be bounded from below by the function $\de_\Ga$, by means of the following inequality proved in \cite[Lemma 3.1]{MP2}:
%
%
\begin{equation}
\label{eq:relationdist}
-u(x) \ge \frac{r_i}{2}\,\de_\Ga(x)\ \mbox{ for every } \ x\in\ol{\Om}.
\end{equation}

%
%

We describe how this task is accomplished. First, as done in Lemmas \ref{lem:Lp-estimate-oscillation-generic-v} and \ref{lem:L2-estimate-oscillation}, we show that the oscillation of $h$ on $\Ga$, and hence $\rho_e - \rho_i$ can be bounded from above in the following way: 
\begin{equation}\label{intro:eq:lemmanuovooscillationLpnorm}
\rho_e - \rho_i \le C (N, p, d_\Om, r_i, r_e) \, \nr h - h_\Om \nr_{p, \Om}^{p/(N+p)} ,
\end{equation}
where $h_\Om$ is the mean value of $h$ on $\Om$ and $p \in \left[1, \infty \right)$.
We stress that this inequality is new and generalizes to any $p$ the estimate obtained in \cite[Lemma 3.3]{MP} for $p=2$.
%
%

Next, to relate the right-hand side of \eqref{intro:eq:lemmanuovooscillationLpnorm} to the integrals in \eqref{eq:introintegrals} we proceed as follows.
We choose $z \in \Om$ as a global minimum 
%
%
point of $u$ (notice that this is always attained in $\Om$) and we apply two integral inequalities to $h$ and its first (harmonic) derivatives. One is the Hardy-Poincar\'e-type inequality 
\begin{equation}
\label{boas-straube}
\nr v \nr_{r, \Om} \le C(N , r, p, \al, d_\Om, r_i) \, \nr \de_\Ga^\al \, \na v \nr_{p, \Om},
\end{equation}
that is applied to the first (harmonic) derivatives of $h$. It holds for any harmonic function $v$
in $\Om$ that is zero at the point $z$ (notice that our choice of $z$ guarantees $\na h(z)=0$). The three numbers $r, p, \al$ are such that $0 \le \al \le 1$ and either $1 \le p \le r \le \frac{Np}{N-p(1 - \al )}$, $p(1 - \al)<N$ ,
or $1 \le r=p <\infty$ (see Lemma \ref{lem:John-two-inequalities} and Remark \ref{rem:nuovaperdirecheBSgeneralizza}).
The other one is applied to
$h- h_\Om$ and is the Poincar\'e-type inequality
\begin{equation}
\label{classicalpoincare}
\nr v \nr_{r, \Om} \le C(N, r, p, d_\Om, r_i) \, \nr \na v \nr_{p, \Om} ,
\end{equation}
that holds for any function $v\in W^{1,p}(\Om)$ with zero mean value on $\Om$. This holds for $r$ and $p$ as above, 
%
%
with $\al=0$ (see Lemma \ref{lem:John-two-inequalities}).

%
%

If we put together \eqref{intro:eq:lemmanuovooscillationLpnorm}-\eqref{classicalpoincare} and choose $\al=1/2$ and $\al=0$ in \eqref{boas-straube} we obtain respectively that
\begin{equation}
\label{the-estimate-for-h}
\rho_e - \rho_i \le C(N, d_\Om, r_i, r_e) \, \left(\int_{\Om} (-u)\, |\na ^2 h|^2\,dx\right)^{\tau_N /2},
\end{equation}
with $\tau_N$ as in \eqref{intro:eq:Improved-Serrin-stability},
and
\begin{equation}
\label{eq:step2}
\rho_e - \rho_i \le C(N, d_\Om, r_i, r_e ) \, \| \na^2 h \|^{\tau_N}_{2, \Om},
\end{equation}
with $\tau_N$ as in \eqref{intro:eq:SBT-improved-stability}.
%
%
%
%
All the details about \eqref{the-estimate-for-h} and \eqref{eq:step2} can be found in Theorems \ref{thm:serrin-W22-stability} and \ref{thm:SBT-W22-stability}.

We mention that in low dimensions -- $N=2$ for \eqref{the-estimate-for-h} and $N=2,3$ for \eqref{eq:step2} -- there is no need to use \eqref{intro:eq:lemmanuovooscillationLpnorm}, thanks to the Sobolev imbedding theorem (see item (i) of Theorems \ref{thm:serrin-W22-stability} and \ref{thm:SBT-W22-stability}).


The proofs of \eqref{intro:eq:Improved-Serrin-stability} and \eqref{intro:eq:SBT-improved-stability} are then completed by the inequalities:
\begin{equation*}
\int_\Om (-u) |\na^2 h|^2 dx \le C(N, d_\Om, r_i, r_e ) \,\nr u_\nu-R\nr_{2,\Ga}^2 ,
\end{equation*}
\begin{equation*}
\| \na ^2 h \|_{2,\Om} \le C (N, d_\Om, r_i, r_e ) \, \| H_0 - H \|_{2, \Ga},
\end{equation*}
that can be deduced by working on the right-hand side of \eqref{idwps} and \eqref{H-fundamental} with arguments taken from \cite{MP2} (see the proof of Theorems \ref{thm:Improved-Serrin-stability} and \ref{thm:SBT-improved-stability} respectively).

\section{Some estimates for harmonic functions}
\label{sec:estimates for harmonic}

%
%
%
We begin by setting some relevant notations. 

By $\Om\subset\RR^N$, $N\ge 2$, we shall denote a bounded domain, that is a connected bounded open set, and call $\Ga$ its boundary. 
By $|\Om|$ and $|\Ga|$, we will denote indifferently the $N$-dimensional Lebesgue measure of $\Om$
and the surface measure of $\Ga$. When $\Ga$ is of class $C^1$, $\nu$ will denote the (exterior) unit normal vector field to $\Ga$ and, when $\Ga$ is a hypersurface of class $C^2$, $H(x)$ will denote its mean curvature  (with respect to $-\nu(x)$) at $x\in\Ga$. 
\par
As already mentioned in the introduction, the diameter of $\Om$ is indicated by $d_\Om$, while $\de_\Ga (x)$ denotes the distance of a point $x$ to the boundary $\Ga$.

We will also use the letter $q$ to denote the quadratic polynomial defined in \eqref{quadratic}, where $z$ is any point in $\RR^N$ and $a$ is any real number; furthermore, we will always use the letter $h$ to denote the harmonic function
$$
h=q-u,
$$
where $u$ is the solution of \eqref{serrin1} and $q$ is the quadratic polynomial defined in \eqref{quadratic}.

For a point $z\in\Om$, $\rho_i$ and $\rho_e$ shall denote 
the radius of the largest ball contained in $\Om$
and that of the smallest ball that contains $\Om$, both centered at $z$; in formulas, 
\begin{equation}
\label{def-rhos}
\rho_i=\min_{x\in\Ga}|x-z|  \ \mbox{ and } \ \rho_e=\max_{x\in\Ga}|x-z|.
\end{equation}
Notice that $\rho_i = \de_\Ga(z)$.

%
%

We recall that if $\Ga$ is of class $C^2$, $\Om$ has the properties of the uniform interior and exterior sphere condition, whose respective radii we have designated by $r_i$ and $r_e$. In other words,
there exists $r_e > 0$ (resp. $r_i>0$) such that for each $p \in \Ga$ there exists a ball contained in $\RR^N \setminus \ol{\Om}$ (resp. contained in $\Om$) of radius $r_e$ (resp. $r_i$) such that
its closure intersects $\Ga$ only at $p$.

Also, if $\Ga$ is of class $C^{2}$,
%
%
the unique solution of \eqref{serrin1} is of class at least
$C^{1,\al}(\ol{\Om})$.
%
%
Thus, we can define
\begin{equation}
\label{bound-gradient}
M=\max_{\ol{\Om}} |\na u|=\max_{\Ga} u_\nu.
\end{equation}
As shown in \cite[Theorem 3.10]{MP}, the following bound holds for $M$:
\begin{equation}
\label{bound-M}
M\le c_N\,\frac{d_\Om(d_\Om+r_e)}{r_e},
\end{equation}
where $c_N=3/2$ for $N=2$ and $c_N=N/2$ for $N\ge 3$. Notice that, when $\Om$ is convex, we can choose $r_e=+\infty$ in \eqref{bound-M} and obtain
\begin{equation}
\label{bound-M-convex}
M\le c_N\,d_\Om.
\end{equation}
More in general, up to a change of the constant $c_N$, \eqref{bound-M-convex} still holds true if $\Ga$ is a mean convex (i.e., $H\ge0$) surface. This is a consequence of \cite[Lemma 2.2]{CM} and a trivial bound. 
Notice that even if \cite[Lemma 2.2]{CM} is stated for strictly mean convex surfaces (i.e., $H>0$), the same proof still works under the weaker assumption $H\ge0$.

For other similar estimates present in the literature, see \cite[Remark 3.11]{MP}.

For a set $A$ and a function $v: A \to \RR$, $v_A$ denotes the {\it mean value of $v$ in $A$} that is
$$
v_A= \frac{1}{|A|} \, \int_A v \, dx.
$$
Also, for a function $v:\Om \to \RR$ we define
$$
\nr \de_\Ga^\al \, \na v \nr_{p,\Om} = \left( \sum_{i=1}^N \nr \de_\Ga^\al \,  v_i \nr_{p,\Om}^p \right)^\frac{1}{p} \quad \mbox{and} \quad
\nr \de_\Ga^\al \, \na^2 v \nr_{p,\Om} = \left( \sum_{i,j=1}^N \nr \de_\Ga^\al \, v_{ij} \nr_{p,\Om}^p \right)^\frac{1}{p},
$$
for $0 \le \al \le 1$ and $p \in [1, \infty)$.

In the present section we prove the estimates \eqref{the-estimate-for-h} and \eqref{eq:step2}.
In order to fulfill this agenda, 
%
%
we first collect some useful estimates for harmonic functions that have their own interest.
Then,
%
%
we will deduce the desired inequalities for the particular harmonic function $h= q- u$.

The following lemma contains Hardy-Poincar\'e inequalities that can be deduced from the works of Hurri-Syrj\"anen \cite{HS1, HS}. We mention that \cite{HS} was stimulated by the work of Boas and Straube \cite{BS}, which, in turn, improved on a result of Ziemer \cite{Zi}.

In order to state these results, we introduce the notions of $b_0$-John domain and $L_0$-John domain with base point $z \in \Om$. Roughly speaking, a domain is a $b_0$-John domain (resp. a $L_0$-John domain with base point $z$) if it is possible to travel from one point of the domain to another (resp. from $z$ to another point of the domain) without going too close to the boundary.

A domain $\Om$ in $\RR^N$ is a {\it $b_0$-John domain}, $b_0 \ge 1$, if each pair of distinct points $a$ and $b$ in $\Om$ can be joined by a curve $\ga: \left[0,1 \right] \rightarrow \Om$ such that
%
%
\begin{equation*}
\de_\Ga (\ga(t)) \ge b_0^{-1} \min{ \left\lbrace |\ga(t) - a|, |\ga(t) - b| \right\rbrace  }.
\end{equation*}

A domain $\Om$ in $\RR^N$ is a {\it $L_0$-John domain with base point $z \in \Om$}, $L_0 \ge 1$, if each point $x \in \Om$ can be joined to $z$ by a curve $\ga: \left[0,1 \right] \rightarrow \Om$ such that
\begin{equation*}
\de_\Ga (\ga(t)) \ge L_0^{-1} |\ga(t) - x|.
\end{equation*}

It is known that, for bounded domains, the two definitions are quantitatively equivalent (see \cite[Theorem 3.6]{Va}).
%
%
The two notions could be also defined respectively through the so-called {\it $b_0$-cigar} and {\it $L_0$-carrot} properties (see \cite{Va}).

\begin{lem}\label{lem:John-two-inequalities}
Let $\Om \subset \RR^N$ be a bounded $b_0$-John domain, and consider three numbers $r, p, \al$ such that, either
\begin{equation}\label{eq:conditionHS}
1 \le p \le r \le \frac{N\,p}{N-p\,(1 - \al )} , \quad p\,(1 - \al)<N , \quad 0 \le \al \le 1 ,
\end{equation}
or
\begin{equation}\label{eq:conditionBS}
r = p \in \left[ 1, \infty \right) , \quad \al=0.
\end{equation}
Then,

(i) there exists a positive constant $ \mu_{r,p, \al} ( \Om, z ) $, such that
\begin{equation}
\label{John-harmonic-quasi-poincare}
\nr v \nr_{r,\Om} \le \mu_{r, p, \al} ( \Om, z)^{-1} \nr \de_\Ga^{\al} \, \na v  \nr_{p, \Om},
\end{equation}
for every function $v$ which is harmonic in $\Om$ and such that $v(z)=0$;
\par
(ii) there exists a positive constant, $\ol{\mu}_{r, p, \al} (\Om)$ such that
\begin{equation}
\label{John-harmonic-poincare}
\nr v - v_\Om \nr_{r,\Om} \le \ol{\mu}_{r, p, \al} (\Om)^{-1} \nr \de_\Ga^{\al} \, \na v  \nr_{p, \Om},
\end{equation}
for every function $v$ which is harmonic in $\Om$.
\end{lem}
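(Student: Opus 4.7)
The plan is to obtain part (ii) directly from the Hurri-Syrj\"anen weighted Sobolev-Poincar\'e inequalities on John domains, which hold for arbitrary Sobolev functions and in particular for harmonic ones, and then deduce part (i) as a short corollary of part (ii) via the mean value property of harmonic functions at $z$. Under condition \eqref{eq:conditionHS}, part (ii) is exactly the weighted Sobolev-Poincar\'e inequality proved in \cite{HS} for bounded $b_0$-John domains (which in turn extends \cite{BS}). Under condition \eqref{eq:conditionBS}, where $r=p$ and $\al=0$, the same inequality reduces to the classical (unweighted) Poincar\'e inequality on John domains, documented in \cite{HS1}. In either regime the constant $\ol{\mu}_{r,p,\al}(\Om)$ depends only on the John constant $b_0$, the exponents $r,p,\al$, the dimension $N$, and the diameter $d_\Om$, and no harmonicity of $v$ is required.

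For part (i), fix $v$ harmonic in $\Om$ with $v(z)=0$, and set $B:=B_{\de_\Ga(z)}(z)\subset\Om$. The mean value property gives $v_B=v(z)=0$, so
\begin{equation*}
|v_\Om|=|v_\Om-v_B|=\left|\frac{1}{|B|}\int_B (v-v_\Om)\,dx\right|\le |B|^{-1/r}\,\nr v-v_\Om\nr_{r,\Om},
\end{equation*}
by H\"older's inequality applied on $B$. Therefore,
\begin{equation*}
\nr v\nr_{r,\Om}\le \nr v-v_\Om\nr_{r,\Om}+|\Om|^{1/r}\,|v_\Om|\le \left(1+(|\Om|/|B|)^{1/r}\right)\nr v-v_\Om\nr_{r,\Om},
\end{equation*}
and combining with part (ii) yields \eqref{John-harmonic-quasi-poincare} with an extra dependence on $\de_\Ga(z)=\rho_i$ coming from $|B|=\omega_N\de_\Ga(z)^N$, which is absorbed into $\mu_{r,p,\al}(\Om,z)$.

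The only real obstacle is bibliographical rather than conceptual: one must locate in \cite{HS,HS1,BS} the precise statement that covers both exponent regimes \eqref{eq:conditionHS} and \eqref{eq:conditionBS} for general bounded $b_0$-John domains, and verify that the endpoint condition $p(1-\al)<N$ is exactly what makes the Sobolev exponent $Np/(N-p(1-\al))$ finite, so that the full range of $r$ in \eqref{eq:conditionHS} is meaningful. Once part (ii) is in hand, the deduction of part (i) from it is the short, routine application of the mean value property and H\"older's inequality sketched above, and no additional difficulty is expected.
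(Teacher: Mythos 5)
Your proposal is correct and takes essentially the same route as the paper: part (ii) comes from the Hurri-Syrj\"anen inequalities \cite{HS, HS1} on bounded John domains, and part (i) is deduced from (ii) by the mean value property of $v$ at $z$ on $B_{\de_\Ga(z)}(z)$ together with H\"older's and the triangle inequality, which is precisely the paper's argument. The only minor wrinkle is that the cited results are stated with the $r$-mean $v_{r,\Om}$ (the minimizer of $\la\mapsto\nr v-\la\nr_{r,\Om}$) rather than with $v_\Om$, so part (ii) is not literally the quoted inequality but follows from it by the same routine mean-replacement estimate (the paper's inequality \eqref{eq:mediaor-mediauguale}) that you already carry out in your derivation of part (i).
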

\begin{proof}
In \cite[Theorem 1.3]{HS} and \cite[Theorem 8.5]{HS1} it is proved -- if $r, p, \al$ are as in \eqref{eq:conditionHS} and \eqref{eq:conditionBS}, respectively -- that there exists a constant $c=c(N,\, r, \, p,\, \al, \, \Om)$ such that
\begin{equation}\label{eq:risultatodiHSconr-media}
\nr v - v_{r, \Om} \nr_{r,\Om} \le c \, \nr \de_\Ga^{\al} \, \na v  \nr_{p, \Om} ,
\end{equation}
for every $v \in L^1_{loc}(\Om)$ such that $\de_\Ga^\al \, \na v  \in L^p(\Om)$.
Here, $v_{r,\Om}$ denotes the {\it $r$-mean of $v$ in $\Om$} which is defined -- following \cite{IMW} -- as the unique minimizer of the problem
$$\inf_{\la \in \RR} \nr v - \la \nr_{r,\Om}.$$
Notice that, in the case $r=2$, $v_{2, \Om}$ is the classical mean value of $v$ in $\Om$, i.e. $v_{2,\Om} = v_\Om$,
as can be easily verified.

In order to obtain \eqref{John-harmonic-quasi-poincare} and \eqref{John-harmonic-poincare}, we have just to manipulate the left-hand side of \eqref{eq:risultatodiHSconr-media}. To this aim, we exploit the following inequality
\begin{equation}\label{eq:mediaor-mediauguale}
\nr v - v_A \nr_{p,\Om} \le \left[ 1+ \left( \frac{|\Om|}{|A|} \right)^{\frac{1}{p}} \right] \, \nr v - \la \nr_{p,\Om},
\end{equation}
that holds for every $\la \in \RR$, if $\Om$ is a domain with finite measure, $v \in L^p (\Om)$, and $A \subseteq \Om$ is a set of positive measure. Inequality \eqref{eq:mediaor-mediauguale} can be easily proved as follows.
By H\"older's inequality, we have that 
\begin{equation*}
|v_A - \la| \le \frac1{|A|} \int_A|v-\la| \, dx \le 
|A|^{-1/p} \nr v- \la \nr_{p,A} \le
|A|^{-1/p} \nr v - \la \nr_{p,\Om}.
\end{equation*}
Since $|v_A - \la|$ is constant, we then infer that
$$
\nr v_A-\la\nr_{p,\Om}=|\Om|^{1/p} |v_A - \la| \le \left( \frac{|\Om|}{|A|} \right)^{\frac{1}{p}} \, \nr v- \la \nr_{p,\Om}.
$$
Thus, \eqref{eq:mediaor-mediauguale} follows by an application of the triangular inequality.

By using \eqref{eq:mediaor-mediauguale} with $A= \Om$ and $\la = v_{r, \Om}$, from \eqref{eq:risultatodiHSconr-media} we thus prove \eqref{John-harmonic-poincare} for every $v \in L^1_{loc}(\Om)$ such that $\de_\Ga^\al \, \na v  \in L^p(\Om)$.
The assumption of the harmonicity of $v$ in \eqref{John-harmonic-poincare} clearly gives a better constant.

Inequality \eqref{John-harmonic-quasi-poincare} can be deduced from \eqref{John-harmonic-poincare} by applying \eqref{eq:mediaor-mediauguale} with $A= B_{\de_\Ga (z)} (z)$ and $\la = v_\Om$ and recalling that, since $v$ is harmonic, by the mean value property it holds that
$v(z)= v_{B_{\de_\Ga (z)} (z)}.$
%

The (solvable) variational problems 
%
%
\begin{equation*}
\mu_{r, p, \al} (\Om, z) = 
\min \left\{ \nr \de_\Ga^{\al} \, \na v  \nr_{p, \Om} : \nr v \nr_{r, \Om} = 1, \,\De v =0 \text{ in } \Om, \, v(z) = 0 \right\}
\end{equation*}
and
%
%
\begin{equation*}
\ol{\mu}_{r, p, \al} (\Om) = 
\min \left\{ \nr \de_\Ga^{\al} \, \na v  \nr_{p, \Om} : \nr v \nr_{r, \Om} = 1, \,\De v =0 \text{ in } \Om,  v_{\Om} = 0 \right\}
\end{equation*}
then characterize the two constants. 
\end{proof}

\begin{rem}\label{rem:nuovaperdirecheBSgeneralizza}
{\rm
Notice that, by choosing $r=p$ in \eqref{eq:conditionHS} one has the restriction $p (1-\al) < N$, that does not appear in \eqref{eq:conditionBS} (when $\al =0$). We point out that, as proved in \cite{BS}, if $\Ga$ is locally the graph of a function of class $C^{0,\ol{\al}}$, $0 \le \ol{\al} \le 1$, then \eqref{John-harmonic-quasi-poincare} and \eqref{John-harmonic-poincare} still hold true when $r=p \in \left[ 1, \infty \right)$ and $0 \le \al \le \ol{\al}$ (again without the restriction $p (1-\al) < N$). Anyway, in this paper we do not need this generalization. Also, exploiting the proofs of \cite{HS, HS1} has the benefit of building explicit estimates for the constants $\mu_{r, p, \al} (\Om, z)^{-1}$ and $\ol{\mu}_{r, p, \al} (\Om)^{-1}$ (see Remark \ref{rem:stime mu HS in item ii}).
}
\end{rem}

%
%

From Lemma \ref{lem:John-two-inequalities} we can derive estimates for the derivatives of harmonic functions, as follows.

\begin{cor}\label{cor:JohnPoincareaigradienti}
Let $\Om\subset\RR^N$, $N\ge 2$, be a bounded $b_0$-John domain and let $v$ be a harmonic function in $\Om$. Consider three numbers $r, p, \al$ 
satisfying either \eqref{eq:conditionHS} or \eqref{eq:conditionBS}.
%
%

(i) If $z$ is a critical point of $v$ in $\Om$, then it holds that
\begin{equation*}
\nr \na v \nr_{r, \Om} \le \mu_{r, p, \al} ( \Om, z)^{-1} \nr \de_\Ga^{\al} \, \na^2 v  \nr_{p, \Om}.
\end{equation*}

(ii) If
$$\int_\Om \na v \, dx = 0,$$
%
%
then it holds that
\begin{equation*}
\nr \na v \nr_{r, \Om} \le \ol{\mu}_{r, p, \al} (\Om)^{-1} \nr \de_\Ga^{\al} \, \na^2 v  \nr_{p, \Om}.
\end{equation*}
\end{cor}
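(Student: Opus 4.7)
The plan is to reduce Corollary \ref{cor:JohnPoincareaigradienti} to Lemma \ref{lem:John-two-inequalities} applied component-wise. The crucial observation is that if $v$ is harmonic in $\Om$, then each partial derivative $v_i = \partial v / \partial x_i$ is also harmonic in $\Om$, since the Laplacian commutes with $\partial_i$. Hence we are in a position to apply the inequalities \eqref{John-harmonic-quasi-poincare} or \eqref{John-harmonic-poincare} separately to each $v_i$, provided we check the hypothesis on $v_i$ at $z$ (respectively on its mean).

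For item (i), the hypothesis that $z$ is a critical point of $v$ translates precisely to $v_i(z) = 0$ for each $i = 1, \dots, N$, so \eqref{John-harmonic-quasi-poincare} gives
\begin{equation*}
\|v_i\|_{r,\Om} \le \mu_{r,p,\al}(\Om,z)^{-1}\,\|\de_\Ga^\al\, \na v_i\|_{p,\Om}, \quad i = 1, \dots, N.
\end{equation*}
For item (ii), the assumption $\int_\Om \na v\, dx = 0$ means $(v_i)_\Om = 0$ for each $i$, so \eqref{John-harmonic-poincare} applies to $v_i$ with $(v_i)_\Om = 0$ and yields an analogous inequality with $\ol{\mu}_{r,p,\al}(\Om)^{-1}$ in place of $\mu_{r,p,\al}(\Om,z)^{-1}$.

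It remains to assemble the componentwise inequalities into vector form compatible with the norm conventions fixed in the paper, namely $\|\na v\|_{r,\Om}^r = \sum_i \|v_i\|_{r,\Om}^r$ and $\|\de_\Ga^\al \na^2 v\|_{p,\Om}^p = \sum_{i,j} \|\de_\Ga^\al v_{ij}\|_{p,\Om}^p = \sum_i \|\de_\Ga^\al \na v_i\|_{p,\Om}^p$. Raising each component inequality to the $r$-th power and summing over $i$ gives
\begin{equation*}
\|\na v\|_{r,\Om}^r \le \mu_{r,p,\al}(\Om,z)^{-r} \sum_{i=1}^N \|\de_\Ga^\al\, \na v_i\|_{p,\Om}^r,
\end{equation*}
and similarly for (ii). Since \eqref{eq:conditionHS} and \eqref{eq:conditionBS} both enforce $p \le r$, the embedding $\ell^p \hookrightarrow \ell^r$ applied to the sequence $a_i = \|\de_\Ga^\al \na v_i\|_{p,\Om}$ gives $\left(\sum_i a_i^r\right)^{1/r} \le \left(\sum_i a_i^p\right)^{1/p} = \|\de_\Ga^\al \na^2 v\|_{p,\Om}$. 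Taking $r$-th roots and combining yields the two claimed inequalities.

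The argument is essentially mechanical; the only point that requires a moment's care is the final passage from the componentwise to the full vector norm, which relies on the fact that the ranges in \eqref{eq:conditionHS} and \eqref{eq:conditionBS} satisfy $p \le r$, so that the monotonicity of $\ell^q$-norms in $q$ points in the right direction. No additional regularity or geometric hypothesis beyond that already in Lemma \ref{lem:John-two-inequalities} is needed.
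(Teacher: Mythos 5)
Your proposal is correct and follows essentially the same route as the paper: apply Lemma \ref{lem:John-two-inequalities} to each (harmonic) partial derivative $v_i$, using $v_i(z)=0$ in case (i) and $(v_i)_\Om=0$ in case (ii), then raise to the power $r$ and sum over $i$. Your final passage via the monotonicity of $\ell^q$-norms for $p\le r$ is exactly the paper's elementary inequality $\sum_i x_i^{r/p}\le\bigl(\sum_i x_i\bigr)^{r/p}$ with $x_i=\nr \de_\Ga^\al\,\na v_i\nr_{p,\Om}^p$, so no gap remains.
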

\begin{proof}
Since $\na v (z)=0$ (respectively $\int_\Om \na v \, dx = 0,$), we can apply \eqref{John-harmonic-quasi-poincare} (respectively \eqref{John-harmonic-poincare}) to each first partial derivative $v_i$ of $v$, $i=1, \dots, N$. If we raise to the power of $r$ those inequalities and sum over $i=1, \dots, N$, the conclusion easily follows in view of the inequality
$$
\sum_{i=1}^N x_i^{\frac{r}{p}} \le \left( \sum_{i=1}^N x_i \right)^{\frac{r}{p}} 
$$
that holds for every $(x_1, \dots, x_N) \in \RR^N$ with $x_i \ge 0$ for $i=1, \dots, N$, since $r/p \ge 1$.
\end{proof}

\begin{rem}[Tracing the geometric dependence of the constants]\label{rem:stime mu HS in item ii}
{\rm
In this remark, we explain how to trace the dependence on a few geometrical parameters of the constants in the relevant inequalities. 

%
%

(i) When $r, p, \al$ are as in \eqref{eq:conditionHS}, the proof of \cite{HS} gives an explicit upper bound for the constant $c$ appearing in \eqref{eq:risultatodiHSconr-media}, from which, by following the steps of our proof, we can deduce explicit estimates for $\mu_{r,p,\al}(\Om,z)^{-1}$ and $\ol{\mu}_{r, p, \al} (\Om)^{-1}$.
In fact, we easily show that
\begin{equation*}
\ol{\mu}_{r, p, \al} (\Om)^{-1} \le k_{N,\, r, \, p,\, \al} \, b_0^N |\Om|^{\frac{1-\al}{N} +\frac{1}{r} - \frac{1}{p} } ,
\end{equation*}
\begin{equation*}
\mu_{r,p,\al}(\Om,z)^{-1} \le k_{N,\, r, \, p, \,\al} \, \left(\frac{b_0}{\de_\Ga (z)^{\frac{1}{r} }}\right)^N |\Om|^{\frac{1-\al}{N} +\frac{2}{r} - \frac{1}{p} } .
\end{equation*}

A better estimate for $\mu_{r,p,\al}(\Om,z)$ can be obtained for $L_0$-John domains with base point $z$. Since the computations are tedious and technical we refer to \cite[Lemma A.2 in Appendix A]{PogTesi} and here we just report the final estimate, that is,
\begin{equation}\label{eq:estimatemu-r-p-al-generalizingFeappendix}
\mu_{r,p,\al}(\Om,z)^{-1} \le k_{N,r,p,\al} \, L_0^N |\Om|^{\frac{1-\al}{N} +\frac{1}{r} - \frac{1}{p} } .
\end{equation}

(ii) When $r,p, \al$ are as in \eqref{eq:conditionBS}, then the proof of \cite[Theorem 8.5]{HS1} gives 
an explicit upper bound for $\ol{\mu}_{p,p,0} (\Om)^{-1}$, in terms of $b_0$ and $d_{\Om}$ only. We warn the reader that the definition of John domain used there is different from the definitions that we gave in this paper, but it is equivalent in view of \cite[Theorem 8.5]{MarS}. Explicitly, by putting together \cite[Theorem 8.5]{HS1} and \cite[Theorem 8.5]{MarS} one finds that
\begin{equation*}
\ol{\mu}_{p,p,0} (\Om)^{-1} \le k_{N, \, p} \, b_0^{3N(1 + \frac{N}{p})} \, d_\Om.
\end{equation*}

Reasoning as in the proof of \eqref{John-harmonic-quasi-poincare}, from this estimate one can also deduce a bound for $\mu_{p,p,0}(\Om,z)$. In fact, by applying \eqref{eq:mediaor-mediauguale} with $A= B_{\de_\Ga (z)} (z)$ and $\la = v_\Om$ and recalling the mean value property of $v$, from \eqref{John-harmonic-poincare} and the bound for $\ol{\mu}_{p,p,0} (\Om)$, we easily compute that
\begin{equation*}
\mu_{p,p,0} (\Om, z)^{-1} \le k_{N, \, p} \, \left( \frac{|\Om|}{\de_\Ga (z)^N} \right)^\frac{1}{r} \,b_0^{3N(1 + \frac{N}{p})} \, d_\Om.
\end{equation*}

A better estimate for $\mu_{p,p,0}(\Om,z)$ can be obtained for $L_0$-John domains with base point $z$, that is,
\begin{equation}\label{eq:estimatemu-p-0-generalizingFeappendix}
\mu_{p,p, 0}(\Om,z)^{-1} \le k_{N, \, p} \, L_0^{3N(1 + \frac{N}{p})} \, d_\Om .
\end{equation}
Complete computations to obtain \eqref{eq:estimatemu-p-0-generalizingFeappendix} can be found in \cite[Lemma A.4 in Appendix A]{PogTesi}.

(iii) A domain of class $C^{2}$ is obviously a $b_0$-John domain and a $L_0$-John domain with base point $z$ for every $z \in \Om$. In fact, by the definitions, it is not difficult to prove the following bounds
%
%
\begin{equation*}
b_0 \le  \frac{d_\Om}{r_i} ,
\end{equation*}
%
%
\begin{equation*}
L_0 \le \frac{d_\Om}{\min[r_i, \de_\Ga (z)] } .
\end{equation*}

Thus, for $C^2$-domains items (i) and (ii) inform us that:
when $r,p, \al$ are as in \eqref{eq:conditionHS}, we have that
\begin{equation*}
\ol{\mu}_{r, p, \al} (\Om)^{-1} \le k_{N,\, r, \, p,\, \al} \, \left( \frac{d_\Om}{r_i} \right)^N |\Om|^{\frac{1-\al}{N} +\frac{1}{r} - \frac{1}{p} } ,
\end{equation*}
\begin{equation*}
\mu_{r,p,\al}(\Om,z)^{-1} \le k_{N,r,p,\al} \, \left( \frac{d_\Om}{\min[r_i, \de_\Ga (z)] } \right)^N |\Om|^{\frac{1-\al}{N} +\frac{1}{r} - \frac{1}{p} } ;
\end{equation*}
when $r,p, \al$ are as in \eqref{eq:conditionBS}, we have that
\begin{equation*}
\ol{\mu}_{p,p,0} (\Om)^{-1} \le k_{N, \, p} \,  \frac{d_\Om^{3N(1 + \frac{N}{p}) + 1 }  }{r_i^{3N(1 + \frac{N}{p})}  }  ,
\end{equation*}
\begin{equation*}
\mu_{p,p, 0}(\Om,z)^{-1} \le k_{N, \, p} \, \frac{d_\Om^{3N(1 + \frac{N}{p}) + 1 }  }{\min[r_i, \de_\Ga (z)]^{3N(1 + \frac{N}{p})} } .
\end{equation*}
}
\end{rem}

\medskip

The next lemma, that modifies for our purposes an idea of W. Feldman \cite{Fe}, will be useful to bound the right-hand side of \eqref{idwps}. We mention that the proof that we report here comes from \cite{MP2}.
%
%
\begin{lem}[A trace inequality for harmonic functions]
\label{lem:genericv-trace inequality}
Let $\Om\subset\RR^N$, $N\ge 2$, be a bounded domain with boundary $\Ga$ of class $C^2$ and let $v$ be a harmonic function in $\Om$.

(i) If $z$ is a critical point of $v$ in $\Om$, then it holds that
\begin{equation*}
\int_{\Ga} |\na v|^2 dS_x \le \frac{2}{r_i} \left(1+\frac{N}{r_i\, \mu_{2,2, \frac{1}{2} }(\Om,z)^2 } \right)  \int_{\Om} (-u) |\na^2 v|^2 dx.
\end{equation*}

(ii) If 
$$\int_\Om \na v \, dx = 0,$$ 
then it holds that
\begin{equation*}
\int_{\Ga} |\na v|^2 dS_x \le \frac{2}{r_i} \left(1+\frac{N}{r_i\, \ol{\mu}_{2,2, \frac{1}{2} }(\Om)^2 } \right)  \int_{\Om} (-u) |\na^2 v|^2 dx.
\end{equation*}
\end{lem}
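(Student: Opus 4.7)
The strategy combines three ingredients: a lower bound on $u_\nu$ along $\Gamma$, Green's second identity applied to the pair $(-u, |\nabla v|^2)$, and the Hardy--Poincar\'e inequality from Corollary \ref{cor:JohnPoincareaigradienti} (applied with $r=p=2$, $\al=1/2$).

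First, I would establish that $u_\nu\ge r_i$ everywhere on $\Ga$. This is the standard interior-sphere barrier for the torsion function: for each $p\in\Ga$, take a ball $B_{r_i}(y)\subset\Om$ touching $\Ga$ at $p$, and compare $-u$ with the explicit barrier $w(x)=(r_i^2-|x-y|^2)/2$, which solves $\De w=-N$ in the ball, vanishes on $\pa B_{r_i}(y)$, and has $w_\nu=-r_i$ at $p$. The maximum principle gives $-u\ge w$ inside, and the Hopf boundary comparison at $p$ yields $u_\nu\ge r_i$.

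Next, since $v$ is harmonic, $\De(|\nabla v|^2)=2|\nabla^2 v|^2$, while $\De(-u)=-N$ and $-u=0$ on $\Ga$. Applying Green's second identity to $-u$ and $|\nabla v|^2$ yields
\begin{equation*}
\int_\Ga |\nabla v|^2 \,u_\nu\, dS_x = 2\int_\Om (-u)|\nabla^2 v|^2\,dx + N\int_\Om |\nabla v|^2\,dx.
\end{equation*}
Using the lower bound $u_\nu\ge r_i$ on the left-hand side, I get
\begin{equation*}
r_i\int_\Ga |\nabla v|^2\,dS_x \le 2\int_\Om (-u)|\nabla^2 v|^2\,dx + N\int_\Om |\nabla v|^2\,dx.
\end{equation*}

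It remains to bound $\int_\Om |\nabla v|^2\,dx$ by $\int_\Om(-u)|\nabla^2 v|^2\,dx$. Here I invoke Corollary \ref{cor:JohnPoincareaigradienti} with $r=p=2$, $\al=1/2$: in case (i) the hypothesis $\nabla v(z)=0$ at the critical point gives
\begin{equation*}
\nr\nabla v\nr_{2,\Om}^2 \le \mu_{2,2,1/2}(\Om,z)^{-2}\int_\Om \de_\Ga\,|\nabla^2 v|^2\,dx,
\end{equation*}
while in case (ii) the mean-zero hypothesis on $\nabla v$ yields the same bound with $\ol{\mu}_{2,2,1/2}(\Om)^{-2}$ in place of $\mu_{2,2,1/2}(\Om,z)^{-2}$. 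Combining with \eqref{eq:relationdist}, which gives $\de_\Ga\le 2(-u)/r_i$, produces
\begin{equation*}
\int_\Om |\nabla v|^2\,dx \le \frac{2}{r_i\,\mu_{2,2,1/2}(\Om,z)^{2}}\int_\Om (-u)|\nabla^2 v|^2\,dx,
\end{equation*}
and analogously in case (ii). Plugging this into the previous display and dividing by $r_i$ gives the stated inequalities. Since each step is a clean application of a tool already set up in the paper, I do not anticipate any serious obstacle; the only subtlety is verifying the barrier estimate $u_\nu\ge r_i$, which is well known but deserves a sentence of justification.
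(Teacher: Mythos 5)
Your proposal is correct and follows essentially the same route as the paper: the identity you obtain from Green's second identity applied to $(-u,|\na v|^2)$ is exactly the one the paper derives by integrating $\dv\{v_i^2\na u-u\,\na(v_i^2)\}$ and summing over $i$, and both proofs then combine the Hopf-type bound $u_\nu\ge r_i$ (which the paper quotes from \cite[Theorem 3.10]{MP} rather than re-proving via the barrier, as you do) with Corollary \ref{cor:JohnPoincareaigradienti} for $r=p=2$, $\al=1/2$ and \eqref{eq:relationdist}. The resulting constant matches the statement, so no changes are needed.
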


\begin{proof}
%
%
We begin with the following differential identity:
\begin{equation*}
\label{diffidimpr}
\dv\,\{v^2 \na u - u \, \na(v^2)\}= v^2 \De u - u \, \De (v^2)= N \, v^2 - 2 u \, |\na v|^2,
\end{equation*}
that holds for any $v$ harmonic function in $\Om$, if $u$ is satisfies\eqref{serrin1}.
Next, we integrate on $\Om$ and, by the divergence theorem, we get:
\begin{equation*}
\label{diffidimprint}
\int_{\Ga} v^2 u_{\nu} \, dS_x = N \int_{\Om} v^2 dx + 2 \int_{\Om} (-u) |\na v|^2 dx.
\end{equation*}
We use this identity with $v=v_{i}$, and hence we sum up over $i=1,\dots, N$ to obtain:
\begin{equation*}
\int_{\Ga} |\na v|^2 u_{\nu} dS_x = N \int_{\Om} |\na v|^2 dx + 2 \int_{\Om} (-u) |\na^2 v|^2 dx.
\end{equation*}
Since the term $u_\nu$ at the left-hand side of the last identity 
%
%
can be bounded from below by $r_i$, by an adaptation of Hopf's lemma (see \cite[Theorem 3.10]{MP}), it holds that
\begin{equation*}
r_i \, \int_{\Ga} |\na v|^2 dS_x \le N \int_{\Om} |\na v|^2 dx + 2 \int_{\Om} (-u) |\na^2 v|^2 dx.
\end{equation*}
Thus, the conclusion follows from this last formula, Corollary \ref{cor:JohnPoincareaigradienti} with $r=p=2$ and $\al=1/2$, and \eqref{eq:relationdist}.
\end{proof}

\medskip

We now single out the key lemma that will produce \eqref{the-estimate-for-h} and \eqref{eq:step2}. It contains an inequality  for the oscillation of a harmonic function $v$ in terms of its $L^p$-norm and of a bound for its gradient.
We point out that the following lemma is new and generalizes the estimates proved and used in \cite{MP, MP2} for $p=2$.

To this aim, we define the {\it parallel set} as
$$
\Om_\si=\{ y\in\Om: \de_\Ga (y) >\si\} \quad \mbox{ for } \quad 0<\si \le r_i.
$$

\begin{lem}
\label{lem:Lp-estimate-oscillation-generic-v}
Let $\Om\subset\RR^N$, $N\ge 2$, be a bounded domain with boundary $\Ga$ of class $C^2$ and let $v$ be a harmonic function in $\Om$ of class $C^1 (\ol{\Om})$. Let $G$ be an upper bound for the gradient of $v$ on $\Ga$.
\par
Then, there exist two constants $a_{N,p}$ and $\al_{N,p}$ depending only on $N$ and $p$ such that if
\begin{equation}
\label{smallness-generic-v}
\nr v - v_{\Om} \nr_{p, \Om} \le \al_{N,p} \, r_{i}^{\frac{N+p}{p}}  G  
\end{equation}
holds, we have that
\begin{equation}
\label{Lp-stability-generic-v}  
\max_{\Ga} v - \min_{\Ga} v \le a_{N,p} \,  G^{ \frac{N}{N+p} } \, \nr v - v_{\Om} \nr_{p, \Om}^{ p/(N+p) }.
\end{equation} 
\end{lem}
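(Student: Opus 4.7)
The plan is as follows. Set $\om=\max_{\Ga} v-\min_{\Ga} v$ and note, by the strong maximum principle, that this coincides with $\max_{\ol\Om} v-\min_{\ol\Om} v$ and that the extrema are attained at boundary points. Up to replacing $v$ by $-v$, I may assume that the maximum point $\bar x\in\Ga$ satisfies $v(\bar x)-v_\Om\ge \om/2$. Since $v$ is harmonic, $|\na v|^2$ is subharmonic, so by the maximum principle $|\na v|\le G$ throughout $\ol\Om$, not just on $\Ga$.

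The core geometric step exploits the uniform interior sphere condition at $\bar x$: there is a ball $B_{r_i}(y)\subset\Om$ with $\ol{B_{r_i}(y)}\cap\Ga=\{\bar x\}$. For any $\rho\in(0,2r_i]$, set $y''=\bar x+(\rho/(2r_i))(y-\bar x)$; then, by construction, $B_{\rho/2}(y'')\subset\ol{B_{r_i}(y)}\subset\ol\Om$ and also $B_{\rho/2}(y'')\subset B_\rho(\bar x)$. Since the closed ball $\ol{B_{r_i}(y)}$ is convex and contains both $\bar x$ and $B_{\rho/2}(y'')$, every segment $[\bar x,x]$ with $x\in B_{\rho/2}(y'')$ lies in $\ol\Om$, so that integrating the bound $|\na v|\le G$ along it yields
$$
v(x)\ge v(\bar x)-G\,|x-\bar x|\ge v_\Om+\om/2-G\,\rho\quad\mbox{for every } x\in B_{\rho/2}(y'').
$$
Choosing $\rho=\om/(4G)$, which is admissible precisely when $\om\le 8\,G\,r_i$, produces $v-v_\Om\ge \om/4$ on $B_{\rho/2}(y'')$. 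Integrating the $p$-th power over this ball gives
$$
\nr v-v_\Om\nr_{p,\Om}^{\,p}\ge |B_1|\,(\rho/2)^N\,(\om/4)^p= c_{N,p}\,\om^{N+p}/G^N,
$$
and a rearrangement yields exactly \eqref{Lp-stability-generic-v}, with $a_{N,p}$ depending only on $N$ and $p$.

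It remains to show that the smallness hypothesis \eqref{smallness-generic-v} forces the admissibility condition $\om\le 8\,G\,r_i$, and this is where the bootstrap comes in. If, by contradiction, $\om>8\,G\,r_i$, I run the same construction with the maximal admissible choice $\rho=2r_i$ (so that $y''=y$); the lower bound then reads $v-v_\Om\ge \om/2-2G\,r_i$, and the strict inequality $\om>8\,G\,r_i$ ensures this is at least $\om/4$ on the whole $B_{r_i}(y)$. Integration yields $\om\le c'_{N,p}\,r_i^{-N/p}\,\nr v-v_\Om\nr_{p,\Om}$, and invoking \eqref{smallness-generic-v} gives $\om\le c'_{N,p}\,\al_{N,p}\,r_i\,G$. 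Choosing $\al_{N,p}$ so small that $c'_{N,p}\,\al_{N,p}<8$ then contradicts $\om>8\,G\,r_i$. This fixes $\al_{N,p}$ depending only on $N$ and $p$ and completes the argument.

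The step that most needs care is the geometric bookkeeping: simultaneously placing the auxiliary ball $B_{\rho/2}(y'')$ inside the interior tangent ball (to guarantee it lies in $\Om$) and inside $B_\rho(\bar x)$ (so that the gradient bound along $[\bar x,x]$ gives the desired lower bound on $v$). Once this is set up, the scaling $\rho\sim\om/G$ is forced by the optimization and the final exponents $N/(N+p)$ and $p/(N+p)$ emerge automatically from the volume factor $\rho^N$ against the oscillation factor $\om^p$, while the restriction $\rho\le 2r_i$ is exactly what the smallness assumption is tailored to secure through the bootstrap.
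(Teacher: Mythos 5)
Your argument is correct, and it reaches the paper's conclusion by a somewhat different route. Writing $\om$ for the oscillation of $v$ on $\Ga$, the paper moves inward from \emph{both} extremal boundary points by a distance $\si\le r_i$ along the normal, bounds $|v(y_j)-v_\Om|$ via the mean value property on $B_\si(y_j)\subset\Om$ plus H\"older's inequality, adds the fundamental-theorem-of-calculus term $\si\,G$, and minimizes over $\si$; the smallness assumption \eqref{smallness-generic-v} is then exactly the condition that the optimal $\si$ does not exceed $r_i$, which yields the explicit constants \eqref{eq:costantia_Nal_Nlemmagenericv}. You instead work with a single extremum (the one deviating from $v_\Om$ by at least $\om/2$), prove the pointwise bound $v-v_\Om\ge\om/4$ on a ball of radius comparable to $\om/G$ placed inside the interior tangent ball, and integrate its $p$-th power to bound $\nr v-v_\Om\nr_{p,\Om}$ from below; since your scale $\rho=\om/(4G)$ involves the unknown oscillation rather than the norm, you need the separate contradiction bootstrap with $\rho=2r_i$ to show that \eqref{smallness-generic-v} excludes $\om>8Gr_i$. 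Both proofs rest on the same ingredients --- extrema of $v$ attained on $\Ga$, the bound $|\na v|\le G$ on $\ol\Om$ from harmonicity (your subharmonicity of $|\na v|^2$ is the paper's remark that $|\na v|$ attains its maximum on $\Ga$), the interior sphere condition, and the balance of $\om^p$ against the volume factor at scale $\om/G$ --- and both produce the exponents $N/(N+p)$ and $p/(N+p)$. Your version avoids the mean value property and is slightly more elementary, at the cost of less tidy constants, which is harmless since the statement only asserts the existence of $a_{N,p}$ and $\al_{N,p}$; the only detail to add is the trivial case $G=0$ (then $v$ is constant and \eqref{Lp-stability-generic-v} is immediate), which should be dispatched before dividing by $G$.
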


\begin{proof}
Since $v$ is harmonic it attains its extrema on the boundary $\Ga$.
Let $x_i$ and $x_e$ be points in $\Ga$ that respectively minimize and maximize $v$ on $\Ga$ and, for 
$$
0<\si \le r_i,
$$
define the two points in $y_i, y_e\in\pa\Om_\si$ by
$y_j=x_j-\si\nu(x_j)$, $j=i, e$. 
\par
By the fundamental theorem of calculus we have that
\begin{equation}\label{eq:prova-TFCI-generic v}
v(x_j)= v(y_j) + \int_0^\si \lan\na v(x_j-t\nu(x_j)),\nu(x_j)\ran\,dt.
\end{equation}
%
%
\par
Since $v$ is harmonic and $y_j\in \overline{ \Om }_\si $, $j=i, e$, we can use the mean value property for the balls with radius $\si$ centered at  $y_j$ and obtain: 
\begin{multline*}
|v(y_j) - v_{\Om}| \le \frac1{|B|\, \si^N}\,\int_{B_\si(y_j)}|v - v_{\Om} |\,dy\le \\
\frac{1}{ \left[ |B|\, \si^N \right]^{1/p} } \, \left[\int_{B_\si(y_j)}|v - v_{\Om} |^p\,dy\right]^{1/p}\le 
\frac1{ \left[ |B|\, \si^N \right]^{1/p} } \, \left[\int_{\Om}|v- v_{\Om}|^p\,dy\right]^{1/p} 
\end{multline*}
after an application of H\"older's inequality and by the fact that $B_\si(y_j) \subseteq \Om$. This and \eqref{eq:prova-TFCI-generic v} then yield that
\begin{equation*}
\max_{\Ga} v - \min_{\Ga} v \le 2 \, \left[  \frac{\nr v - v_{\Om} \nr_{p, \Om} }{ |B|^{1/p} \, \si^{N/p}}+ \si \, G \right] ,
\end{equation*}
for every $0<\si \le r_i$.
Here we used that 
%
%
%
$|\na v|$ attains its maximum on $\Ga$, being $v$ harmonic.
\par
Therefore, by minimizing the right-hand side of the last inequality, we can conveniently choose 
$$
\si=\left(\frac{N\,\nr v - v_{\Om} \nr_{p, \Om} }{ p \, |B|^{1/p}\, G }\right)^{p/(N+p)} 
$$
and obtain \eqref{Lp-stability-generic-v}, if $\si \le r_i$;  \eqref{smallness-generic-v} will then follow. The explicit computation immediately shows that
\begin{equation}
\label{eq:costantia_Nal_Nlemmagenericv}
a_{N,p}= \frac{ 2 (N+p) }{N^{\frac{N}{N+p}} p^{\frac{p}{N+p}}  \,|B|^{\frac{1}{N+p}} }
\quad \mbox{and} \quad \al_{N,p}= \frac{ p }{N} \, |B|^{\frac{1}{p} } .
\end{equation}

Notice that, the fact that \eqref{Lp-stability-generic-v} holds if \eqref{smallness-generic-v} is verified, remains true even if we replace in \eqref{smallness-generic-v} and \eqref{Lp-stability-generic-v} $v_\Om$ by any $\la \in \RR$.
\end{proof}

We now turn back our attention to the harmonic function $h = q-u$,
and by exploiting \eqref{oscillation} we now modify Lemma \ref{lem:Lp-estimate-oscillation-generic-v} to
directly link $\rho_e - \rho_i$ to the $L^p$-norm of $h$.
%
%
We do it in the following lemma which generalizes to the case of any $L^p$-norm \cite[Lemma 3.3]{MP}, that holds for $p=2$.

%
%

\begin{lem}
\label{lem:L2-estimate-oscillation}
Let $\Om\subset\RR^N$, $N\ge 2$, be a bounded domain with boundary of class $C^2$.
Set $h=q-u$, where $u$ is the solution of \eqref{serrin1} and $q$ is any quadratic polynomial as in \eqref{quadratic} with $z\in\Om$. 
\par
Then, there exists a positive constant $C$ such that
\begin{equation}
\label{L2-stability}
\rho_e-\rho_i\le C \, \nr h - h_{\Om} \nr_{p, \Om}^{ p/(N+p) }.
\end{equation}
The constant $C$ depends on $N$, $p$, $d_\Om$, $r_i$, $r_e$. If $\Ga$ is mean convex the dependence on $r_e$ can be removed.
%
%
\end{lem}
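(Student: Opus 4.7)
The plan is to apply Lemma \ref{lem:Lp-estimate-oscillation-generic-v} directly to the harmonic function $v=h=q-u$, and then convert the bound on $\max_{\Ga}h-\min_{\Ga}h$ into a bound on $\rho_e-\rho_i$ via the geometric identity \eqref{oscillation}. The only two pieces of information we need to supply are (a) an explicit upper bound $G$ for $|\na h|$ on $\Ga$, depending only on the allowed parameters, and (b) a way to bypass the smallness hypothesis \eqref{smallness-generic-v} when it fails.

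For (a), since $\na h=(x-z)-\na u$ and $z\in\Om$, on $\Ga$ we have $|\na q|=|x-z|\le d_\Om$, while $|\na u|\le M$ by \eqref{bound-gradient}. Moreover $|\na h|^2$ is subharmonic, so its maximum on $\ol\Om$ is attained on $\Ga$. Combining this with the bound \eqref{bound-M} for $M$ yields
\begin{equation*}
G:=d_\Om+c_N\,\frac{d_\Om(d_\Om+r_e)}{r_e}\ge \max_{\ol\Om}|\na h|,
\end{equation*}
so $G$ depends only on $N$, $d_\Om$, $r_e$. If $\Ga$ is mean convex we may invoke \eqref{bound-M-convex} instead and remove the dependence on $r_e$.

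For (b), we argue by dichotomy. If the smallness assumption \eqref{smallness-generic-v} holds for $v=h$, then Lemma \ref{lem:Lp-estimate-oscillation-generic-v} gives
\begin{equation*}
\max_\Ga h-\min_\Ga h\le a_{N,p}\,G^{N/(N+p)}\,\nr h-h_\Om\nr_{p,\Om}^{p/(N+p)},
\end{equation*}
and combining with the lower estimate $\max_\Ga h-\min_\Ga h\ge (r_i/2)(\rho_e-\rho_i)$ from \eqref{oscillation} yields \eqref{L2-stability} with constant $2a_{N,p}\,G^{N/(N+p)}/r_i$. If instead \eqref{smallness-generic-v} fails, then
\begin{equation*}
\nr h-h_\Om\nr_{p,\Om}^{p/(N+p)}> \al_{N,p}^{p/(N+p)}\,r_i\,G^{p/(N+p)},
\end{equation*}
and we can use the trivial bound $\rho_e-\rho_i\le d_\Om$ to conclude
\begin{equation*}
\rho_e-\rho_i\le \frac{d_\Om}{\al_{N,p}^{p/(N+p)}\,r_i\,G^{p/(N+p)}}\,\nr h-h_\Om\nr_{p,\Om}^{p/(N+p)}.
\end{equation*}
Taking the maximum of the two constants finishes the proof, with explicit dependence on $N$, $p$, $d_\Om$, $r_i$, $r_e$ (or only $N$, $p$, $d_\Om$, $r_i$ in the mean convex case, via \eqref{bound-M-convex}).

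No step here is a real obstacle; the only mildly delicate point is the bookkeeping in the dichotomy, ensuring that a uniform positive lower bound for $G$ is available (granted since $G\ge d_\Om>0$) so that the constant arising in the "large oscillation" branch stays finite.
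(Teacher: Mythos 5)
Your proposal is correct and follows essentially the same route as the paper: apply Lemma \ref{lem:Lp-estimate-oscillation-generic-v} to $v=h$ with the gradient bound $|\na h|\le M+d_\Om$ (the paper keeps $M$ symbolic and only invokes \eqref{bound-M} or \eqref{bound-M-convex} at the end), convert the oscillation bound via \eqref{oscillation}, and handle the failure of the smallness condition \eqref{smallness-generic-v} by the trivial bound $\rho_e-\rho_i\le d_\Om$, taking the maximum of the two constants.
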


\begin{proof}
%
%
%
By direct computations it is easy to check that
\begin{equation*}
| \na h | \le M + d_\Om \quad \mbox{on } \ol{\Om},
%
%
\end{equation*}
where $M$ is the maximum of $|\na u|$ on $\ol{\Om}$, as defined in \eqref{bound-gradient}.
Thus, we can apply Lemma \ref{lem:Lp-estimate-oscillation-generic-v} with $v=h$ and $G= M + d_\Om$.
By means of \eqref{oscillation} we deduce that
\eqref{L2-stability} holds with 
\begin{equation}\label{eq:constantCmaxnuovolemmaoscillation}
C= 2 \, a_{N,p} \, \frac{ (M + d_\Om )^{ \frac{N}{N+p} } }{ r_i } ,
\end{equation} 
if
\begin{equation*}
\nr h - h_{\Om} \nr_{p, \Om} \le \al_{N,p} \, (M + d_\Om) \, r_{i}^{\frac{N+p}{p}} .
\end{equation*}
Here,
$a_{N,p}$ and $\al_{N,p}$ are the constants defined in \eqref{eq:costantia_Nal_Nlemmagenericv}. 
On the other hand, if
\begin{equation*}
\nr h - h_{\Om} \nr_{p, \Om} > \al_{N,p} \, (M + d_\Om) \, r_{i}^{\frac{N+p}{p}} ,
\end{equation*}
it is trivial to check that \eqref{L2-stability} is verified with
$$C= \frac{d_\Om}{ \left[ \al_{N,p} \, (M + d_\Om) \right]^{\frac{p}{N+p}} \, r_{i} }.$$
Thus, \eqref{L2-stability} always holds true if we choose
the maximum between this constant and that in \eqref{eq:constantCmaxnuovolemmaoscillation}. We then can easily see that the following constant will do:
%
%
\begin{equation*}
C= \max{ \left\lbrace 2 \, a_{N,p} , \al_{N,p}^{- \frac{p}{N+p}} \right\rbrace } \, \frac{d_\Om^\frac{N}{N+p} }{r_i} \,  \left( 1 + \frac{M}{d_\Om} \right)^{ \frac{N}{N+p} } .
\end{equation*}
%
%
Now, by means of \eqref{bound-M}, we obtain the constant
\begin{equation*}
C= \max{ \left\lbrace 2 \, a_{N,p} , \al_{N,p}^{- \frac{p}{N+p}} \right\rbrace } \, \frac{d_\Om^\frac{N}{N+p} }{r_i} \,  \left( 1 + c_N \, \frac{d_\Om + r_e}{r_e} \right)^{ \frac{N}{N+p} }.
\end{equation*}
If $\Ga$ is mean convex, the dependence on $r_e$ can be avoided and we can choose
\begin{equation*}
C= \left( 1 + c_N \right)^{ \frac{N}{N+p} } \, \max{ \left\lbrace 2 \, a_{N,p} , \al_{N,p}^{- \frac{p}{N+p}} \right\rbrace } \, \frac{d_\Om^\frac{N}{N+p} }{r_i},
\end{equation*}
in light of \eqref{bound-M-convex}.
\end{proof}

%
%

%
%

For Serrin's overdetermined problem, Theorem \ref{thm:serrin-W22-stability} below will be crucial.
There, we associate the oscillation of $h$, and hence $\rho_e - \rho_i$, with the weighted $L^2$-norm of its Hessian matrix.
%
%
%

To this aim, we now choose the center $z$ of the quadratic polynomial $q$ in \eqref{quadratic} to be a global minimum point of $u$, that is always attained in $\Om$. 
%
%
With this choice we have that $\na h (z) =0$.
%
%
We emphasize that the result that we present here improves (for every $N \ge 2$) the exponents of estimates obtained in \cite{MP2}.

\begin{thm}
\label{thm:serrin-W22-stability} 
Let $\Om\subset\RR^N$, $N\ge 2$, be a bounded domain with boundary $\Ga$ of class $C^{2}$
and $z \in \Om$ be a global minimum point of the solution $u$ of \eqref{serrin1}.
Consider the function  $h=q-u$, with $q$ given by \eqref{quadratic}.

There exists a positive constant $C$ such that
\begin{equation}\label{eq:C-provastab-serrin-W22}
\rho_e-\rho_i\le C\, \nr \de_\Ga^{\frac{1}{2} } \, \na^2 h  \nr_{2,\Om}^{\tau_N} ,
\end{equation}
with the following specifications:
%
%
\begin{enumerate}[(i)]
\item $\tau_2 = 1$;
\item $\tau_3$ is arbitrarily close to one, in the sense that for any $\theta>0$, there exists a positive constant $C$ such that  \eqref{eq:C-provastab-serrin-W22} holds with $\tau_3 = 1- \theta$;
\item $\tau_N = 2/(N-1)$ for $N \ge 4$.
\end{enumerate}

The constant $C$
%
%
depends on $N$, $r_i$, $r_e$, $d_\Om$,
and $\theta$ (only in the case $N=3$).
If $\Ga$ is mean convex the dependence on $r_e$ can be removed.
\end{thm}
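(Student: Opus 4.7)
The plan is to start from \eqref{oscillation}, which reduces the problem to bounding the oscillation $\max_\Ga h-\min_\Ga h$. Because $z$ is a global minimum point of $u$ and $\na q(z)=0$ by construction, we have $\na h(z)=0$, so $z$ is a critical point of the harmonic function $h$; this is precisely the hypothesis that enables item (i) of Corollary \ref{cor:JohnPoincareaigradienti}.

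For $N\ge 4$ I would chain three ingredients. Lemma \ref{lem:L2-estimate-oscillation} gives $\rho_e-\rho_i\le C\,\nr h-h_\Om\nr_{p,\Om}^{p/(N+p)}$ for any $p\in[1,\infty)$; Lemma \ref{lem:John-two-inequalities}(ii) with $\al=0$ bounds $\nr h-h_\Om\nr_{p,\Om}$ by $\nr\na h\nr_{q_1,\Om}$ under the constraints $q_1\le p\le Nq_1/(N-q_1)$ and $q_1<N$; and Corollary \ref{cor:JohnPoincareaigradienti}(i) with $p=2,\al=1/2$ bounds $\nr\na h\nr_{q_1,\Om}$ by $\nr\de_\Ga^{1/2}\na^2 h\nr_{2,\Om}$ in the range $2\le q_1\le 2N/(N-1)$. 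When $N\ge 4$ the choice $q_1=2N/(N-1)$ satisfies $q_1<N$; pairing it with the optimal $p=2N/(N-3)$ produces the composite exponent $p/(N+p)=2/(N-1)=\tau_N$. For $N=3$ the borderline $q_1=2N/(N-1)=3=N$ is forbidden by Lemma \ref{lem:John-two-inequalities}(ii); I would take instead $q_1=3-\delta$ and the corresponding $p=3(3-\delta)/\delta$, so that $p/(3+p)\to 1$ as $\delta\downarrow 0$. Given $\theta>0$, a suitable $\delta$ yields $\tau_3=1-\theta$, at the cost of letting $C$ depend on $\theta$.

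The case $N=2$ requires a different opening move, because the range $q_1\in[2,2N/(N-1)]=[2,4]$ furnished by Corollary \ref{cor:JohnPoincareaigradienti}(i) never meets the requirement $q_1<N=2$ of Lemma \ref{lem:John-two-inequalities}(ii). I would sidestep Lemma \ref{lem:L2-estimate-oscillation} altogether and use Morrey's embedding $W^{1,4}(\Om)\hookrightarrow L^\infty(\ol{\Om})$, valid in dimension two, to write
\begin{equation*}
\max_\Ga h-\min_\Ga h\le 2\,\nr h-h_\Om\nr_{\infty,\ol{\Om}}\le C\bigl(\nr h-h_\Om\nr_{4,\Om}+\nr\na h\nr_{4,\Om}\bigr).
\end{equation*}
The first term on the right is absorbed into the second by Lemma \ref{lem:John-two-inequalities}(ii) in the regime \eqref{eq:conditionBS} with $r=p=4,\al=0$, while Corollary \ref{cor:JohnPoincareaigradienti}(i) with $r=4,p=2,\al=1/2$ (the conditions of \eqref{eq:conditionHS} are satisfied since $4=2N/(N-1)$ and $p(1-\al)=1<2$) controls $\nr\na h\nr_{4,\Om}$ by $\nr\de_\Ga^{1/2}\na^2 h\nr_{2,\Om}$. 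Combined with \eqref{oscillation}, this yields the Lipschitz bound $\tau_2=1$.

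The main obstacle is the compatibility between the two exponent ranges that arise in the chain: Corollary \ref{cor:JohnPoincareaigradienti}(i) caps the gradient exponent at $q_1=2N/(N-1)$, while Lemma \ref{lem:John-two-inequalities}(ii) requires $q_1<N$. These two requirements are compatible with room to spare only when $N\ge 4$, borderline when $N=3$, and incompatible when $N=2$, which is precisely what forces the three-case statement and makes the low-dimensional cases require separate treatment. The constants can be tracked quantitatively through the explicit bounds in Remark \ref{rem:stime mu HS in item ii} for $C^2$-domains, depending on $N$, $r_i$, $d_\Om$, on $r_e$ (through the bound \eqref{bound-M} on $M$ used inside Lemma \ref{lem:L2-estimate-oscillation}, removable under mean convexity via \eqref{bound-M-convex}), and, in the case $N=3$, on the parameter $\theta$.
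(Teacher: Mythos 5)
Your proposal is correct and follows essentially the same route as the paper's proof: the same chain of Lemma \ref{lem:L2-estimate-oscillation}, the Hardy--Poincar\'e inequality \eqref{John-harmonic-poincare} with $\al=0$, and Corollary \ref{cor:JohnPoincareaigradienti}(i) with $p=2$, $\al=1/2$, with the same exponent choices ($r=2N/(N-1)$, then $r=2N/(N-3)$ for $N\ge4$; the $\de=3\theta$ parametrization for $N=3$; and the $W^{1,4}\hookrightarrow L^\infty$ embedding argument for $N=2$). No gaps to report.
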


\begin{proof}
For the sake of clarity, we will always use the letter $c$ to denote the constants in all the inequalities appearing in the proof.
Their explicit computation will be clear by following the steps of the proof.

(i) 
Let $N=2$.
By the Sobolev immersion theorem (for instance we apply \cite[Theorem 9.1]{Fr} to $h-h_\Om$), we deduce that there is
a constant $c$ such that,
\begin{equation}
\label{eq:immersionSerrinN2VERSIONENEW}
\max_{\ol{\Om}} | h - h_\Om |  \le c \, \nr h-h_\Om \nr_{W^{1,4}(\Om)} .
\end{equation}
%
%
%
Applying \eqref{John-harmonic-poincare} with $v=h$, $r=p=4$, and $\al=0$ leads to
$$
\nr h - h_{\Om} \nr_{W^{1,4}(\Om)}\le c \, \nr \na h\nr_{4,\Om}.
$$
Since $\na h(z)=0$, we can apply item (i) of Corollary \ref{cor:JohnPoincareaigradienti} with $r=4$, $p=2$, and $\al=1/2$ to $h$ and obtain that
$$
\nr \na h \nr_{4,\Om} \le c \,  \nr \de_\Ga^{\frac{1}{2} } \, \na^2 h  \nr_{2,\Om} .
$$
Thus, we have that
$$
\nr h - h_{\Om} \nr_{W^{1,4}(\Om)}\le c \, \nr \de_\Ga^{\frac{1}{2} } \, \na^2 h \nr_{2,\Om} .
$$

By using the last inequality together with \eqref{eq:immersionSerrinN2VERSIONENEW}
%
%
we have that
\begin{equation*}
\max_\Ga h-\min_\Ga h \le 
c \, \nr \de_\Ga^{\frac{1}{2} } \, \na^2 h  \nr_{2,\Om}.
\end{equation*}
Thus, by recalling \eqref{oscillation} we get that \eqref{eq:C-provastab-serrin-W22} holds with $\tau_2=1$.

(ii) Let $N=3$.
By applying to the function $h$ \eqref{John-harmonic-poincare} with $r= \frac{3(1- \theta)}{\theta}$,
$p=3(1 -\theta )$, $\al=0$, and item (i) of Corollary \ref{cor:JohnPoincareaigradienti} with
$r=3 ( 1 - \theta )$, $p=2$, $\al=1/2$, we get
$$
\nr h - h_{\Om} \nr_{\frac{3( 1 - \theta)}{\theta}, \Om} \le c \, \nr \de_\Ga^{\frac{1}{2} } \, \na^2 h \nr_{2,\Om}.
$$
Thus, by recalling Lemma \ref{lem:L2-estimate-oscillation} we have that \eqref{eq:C-provastab-serrin-W22} holds true with $\tau_3 = 1- \theta $.

(iii) Let $N \ge 4$. Since $\na h(z)=0$, we can apply to $h$ item (i) of Corollary \ref{cor:JohnPoincareaigradienti} with $r=\frac{2N}{N-1}$, $p=2$, $\al=1/2$, and obtain that
\begin{equation*}
\nr \na h \nr_{\frac{2N}{N-1},\Om} \le c \, \nr \de_\Ga^{\frac{1}{2} } \, \na^2 h  \nr_{2,\Om} .
\end{equation*}
Being $N \ge 4$, we can apply \eqref{John-harmonic-poincare} with $v=h$, $r=\frac{2N}{N-3}$, $p=\frac{2N}{N-1}$, $\al=0$, and get
\begin{equation*}
\nr h- h_{ \Om} \nr_{\frac{2N}{N-3} } \le c \, \nr \na h \nr_{\frac{2N}{N-1},\Om}.
\end{equation*}
Thus,
$$
\nr h- h_{ \Om} \nr_{\frac{2N}{N-3} } \le c \, \nr \de_\Ga^{\frac{1}{2} } \, \na^2 h  \nr_{2,\Om},
$$
and by Lemma \ref{lem:L2-estimate-oscillation} we get that \eqref{eq:C-provastab-serrin-W22} holds with $\tau_N = 2/(N-1)$.
\end{proof}

\begin{rem}[On the constant $C$]\label{rem:dipendenzecostanti}
{\rm
The constant $C$ can be shown to depend only on the parameters mentioned in the statement of Theorem \ref{thm:serrin-W22-stability}. 
%
%
In fact, the parameters $\mu_{r,p, \al} (\Om,z)$ and $\ol{\mu}_{r,p,\al} (\Om)$,  can be estimated by using item (iii) of Remark \ref{rem:stime mu HS in item ii}.
To remove the dependence on the volume, then one can use the trivial bound
$$
|\Om|^{1/N} \le |B|^{1/N} d_\Om /2 .
$$
%
%
\par
The dependence on $\de_\Ga (z)$ can instead be removed by means of the bound
\begin{equation}\label{eq:lowerbounddist}
\de_\Ga (z) \ge \frac{r_i^2}{2M}.
\end{equation}
To prove \eqref{eq:lowerbounddist}, we choose $x\in\Om$ such that $\de_\Ga (x)= r_i$, $y\in\Ga$ such that $\de_\Ga(z)= |y-z|$, and obtain the chain of inequalities
$$
\frac{r_i^2}{2}\le -u(x)\le\max_{\ol{\Om}}(-u) =-u(z)= u(y)-u(z) \le M\, \de_\Ga (z).
$$
Here,
the first inequality follows from \eqref{eq:relationdist} and, as usual, $M$ is defined by \eqref{bound-gradient} and can be estimated with the help of \eqref{bound-M} (or \eqref{bound-M-convex} if $\Ga$ is mean convex). 
We mention that in the convex case an alternative estimate for $\de_\Ga (z)$ could also be obtained by exploiting \cite{BMS}, as explained in \cite[Remark 3.21]{PogTesi}.
\par
We recall that if $\Om$ satisfies the cone property (for the definition see \cite[Chapter 9]{Fr}), the immersion constant (appearing in \eqref{eq:immersionSerrinN2VERSIONENEW})
%
%
depends only on $N$ and the two parameters of the cone property (see \cite[Theorem 9.1]{Fr}).
In our case $\Om$ satisfies the uniform interior sphere condition, and hence the two parameters of the cone property can be easily estimated in terms of $r_i$.
}
\end{rem}

In the case of Alexandrov's Soap Bubble Theorem, we have to deal with \eqref{H-fundamental} or \eqref{identity-SBT-h}, which simply entail the unweighted $L^2$-norm of the Hessian of $h$.
Thus, the appropriate result in this case is Theorem \ref{thm:SBT-W22-stability} below, in which we
%
%
improve (for every $N\ge 4$) the exponents of estimates obtained in \cite{MP}.

\begin{thm}
\label{thm:SBT-W22-stability}
Let $\Om\subset\RR^N$, $N\ge 2$, be a bounded domain with boundary $\Ga$ of class $C^{2}$
and $z \in \Om$ be a global minimum point of the solution $u$ of \eqref{serrin1}.
%
Consider the function  $h=q-u$, with $q$ given by \eqref{quadratic}.
%
%

There exists a positive constant $C$ such that
\begin{equation}
\label{W22-stability}
\rho_e-\rho_i\le C\, \nr \na^2 h\nr_{2,\Om}^{\tau_N} ,
\end{equation}
with the following specifications:
%
%
\begin{enumerate}[(i)]
\item $\tau_N=1$ for $N=2$ or $3$;
\item $\tau_4$ is arbitrarily close to one, in the sense that for any $\theta>0$, there exists a positive constant $C$ such that \eqref{W22-stability} holds with $\tau_4= 1- \theta $;
\item $\tau_N = 2/(N-2) $ for $N\ge 5$.
\end{enumerate}

The constant $C$ depends on $N$, $r_i$, $r_e$, $d_\Om$, and $\theta$ (only in the case $N=4$).
If $\Ga$ is mean convex the dependence on $r_e$ can be removed.
\end{thm}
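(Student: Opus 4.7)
The proof will mirror the three-step structure of Theorem \ref{thm:serrin-W22-stability}, but routed through the \emph{unweighted} Hardy--Poincar\'e inequalities --- that is, every application of Corollary \ref{cor:JohnPoincareaigradienti} and of \eqref{John-harmonic-poincare} will now be carried out with $\al = 0$. This shortens the admissible range of $r$ by one Sobolev step and, as the reader can already see from the statement, shifts the borderline dimension from $N = 3$ (as in the Serrin case) to $N = 4$. Throughout, the identity $\na h(z) = 0$ --- which follows from the choice of $z$ as a global minimum of $u$ and the fact that $\na q(z) = 0$ --- makes item (i) of Corollary \ref{cor:JohnPoincareaigradienti} available for $v = h$. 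In each case the final step is \eqref{oscillation}, which converts an $L^{\infty}$ or an $L^{p}$ bound for $h - h_{\Om}$ into a bound for $\rho_e - \rho_i$.

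Cases (i) and (iii) are then just exponent bookkeeping. For (i), in dimensions $N = 2, 3$ one has $k\,p = 4 > N$, so the Sobolev immersion $W^{2,2}(\Om) \hookrightarrow L^{\infty}(\Om)$ applies; two consecutive Poincar\'e-type inequalities from Lemma \ref{lem:John-two-inequalities} and Corollary \ref{cor:JohnPoincareaigradienti} with $r = p = 2$, $\al = 0$ --- admissible via \eqref{eq:conditionBS} --- give $\nr h - h_\Om \nr_{\infty} \le c\, \nr \na^2 h \nr_{2,\Om}$, hence $\tau_N = 1$. For (iii), Corollary \ref{cor:JohnPoincareaigradienti}(i) with $r = 2N/(N-2)$, $p = 2$, $\al = 0$ produces $\nr \na h \nr_{2N/(N-2),\Om} \le c\, \nr \na^2 h \nr_{2,\Om}$; then \eqref{John-harmonic-poincare} with $r = 2N/(N-4)$, $p = 2N/(N-2)$, $\al = 0$ (both conditions in \eqref{eq:conditionHS} hold since $2N/(N-2) < N$ for $N \ge 5$) yields $\nr h - h_\Om \nr_{2N/(N-4),\Om} \le c\, \nr \na^2 h \nr_{2,\Om}$; plugging into Lemma \ref{lem:L2-estimate-oscillation} with $p = 2N/(N-4)$ returns the exponent $p/(N+p) = 2/(N-2)$.

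The main obstacle is case (ii), $N = 4$, where the chain of (iii) degenerates: at the endpoint $p = 4 = N$ the condition $p(1-\al) < N$ of \eqref{eq:conditionHS} fails, so one cannot feed the result of Corollary \ref{cor:JohnPoincareaigradienti} into \eqref{John-harmonic-poincare} without loss. The fix is to sacrifice a small amount of regularity: for arbitrary $\theta \in (0, 1/2]$, apply Corollary \ref{cor:JohnPoincareaigradienti}(i) with $r = 4(1-\theta)$, $p = 2$, $\al = 0$, and then \eqref{John-harmonic-poincare} with $r = 4(1-\theta)/\theta$, $p = 4(1-\theta)$, $\al = 0$ --- both admissible under \eqref{eq:conditionHS}, the second at its Sobolev endpoint $Np/(N-p)$. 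This yields $\nr h - h_{\Om} \nr_{4(1-\theta)/\theta,\,\Om} \le c\, \nr \na^2 h \nr_{2,\Om}$, and Lemma \ref{lem:L2-estimate-oscillation} with $p = 4(1-\theta)/\theta$ produces $\tau_4 = p/(N+p) = 1 - \theta$, as announced.

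Finally, the dependence of $C$ on $N, r_i, r_e, d_{\Om}$ (and on $\theta$ only in case (ii)) is read off as in Remark \ref{rem:dipendenzecostanti}: the John constants appearing in Lemma \ref{lem:John-two-inequalities} and Corollary \ref{cor:JohnPoincareaigradienti} are controlled in terms of $r_i, d_{\Om}$ via Remark \ref{rem:stime mu HS in item ii}(iii), while the auxiliary quantity $\de_{\Ga}(z)$ is bounded below by \eqref{eq:lowerbounddist} combined with \eqref{bound-M} --- which needs $r_e$ in general, or with \eqref{bound-M-convex} in the mean-convex case, where the dependence on $r_e$ drops.
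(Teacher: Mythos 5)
Your proposal is correct and follows essentially the same route as the paper's own proof: Sobolev immersion of $W^{2,2}$ into $L^\infty$ plus the $\al=0$ inequalities with $r=p=2$ for $N=2,3$, the chain $r=4(1-\theta)$, then $r=4(1-\theta)/\theta$ for $N=4$, and $r=2N/(N-2)$, then $r=2N/(N-4)$ for $N\ge5$, all fed into \eqref{oscillation} or Lemma \ref{lem:L2-estimate-oscillation}, with the constants traced exactly as in Remarks \ref{rem:stime mu HS in item ii} and \ref{rem:dipendenzecostanti}. The only (harmless) difference is your explicit restriction $\theta\in(0,1/2]$ in case (ii), which the paper leaves implicit.
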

\begin{proof}
As done in the proof of Theorem \ref{thm:serrin-W22-stability}, for the sake of clarity, we will always use the letter $c$ to denote the constants in all the inequalities appearing in the proof.
Their explicit computation will be clear by following the steps of the proof. By reasoning as described in Remark \ref{rem:dipendenzecostanti}, one can easily check that those constants depend only on the geometric parameters of $\Om$ mentioned in the statement of the theorem.

(i) Let $N=2$ or $3$.
By the Sobolev immersion theorem (for instance we apply \cite[Theorem 9.1]{Fr} to $h-h_\Om$), we deduce that there is a constant $c$ such that,
\begin{equation}
\label{eq:immersionSBTN23VERSIONENEWriscritta}
\max_{\ol{\Om}} | h- h_\Om |  \le c \, \nr h-h_\Om \nr_{W^{2,2}(\Om)} .
\end{equation}
%
%
%
%
Since $\na h(z)=0$, we can apply item (i) of Corollary \ref{cor:JohnPoincareaigradienti} with $r=p=2$ and $\al= 0$ to $h$ and obtain that
\begin{equation*}
\int_{\Om} |\na h|^2 \, dx \le c \, \int_{\Om} | \na^2 h|^2 \, dx.
\end{equation*}
Using this last inequality together with \eqref{John-harmonic-poincare} with $v= h$, $r=p=2$, $\al=0$, leads to
$$
\nr h - h_{\Om} \nr_{W^{2,2}(\Om)}\le c \,\nr \na^2 h\nr_{2,\Om}.
$$

Hence, by using \eqref{eq:immersionSBTN23VERSIONENEWriscritta}
%
%
we get that
\begin{equation*}
\max_\Ga h-\min_\Ga h \le c \, \nr \na^2 h\nr_{2,\Om}.
\end{equation*}
Thus, by recalling \eqref{oscillation} we get that \eqref{W22-stability} holds with $\tau_N=1$.

(ii) Let $N=4$.
By applying \eqref{John-harmonic-poincare} with $r= \frac{4( 1 - \theta)}{\theta}$, $p= 4 ( 1 -\theta )$, $\al=0$, and item (i) of Corollary \ref{cor:JohnPoincareaigradienti} with $r=4 ( 1 - \theta )$, $p=2$, $\al= 0$, for $v=h$ we get
$$
\nr h - h_{ \Om} \nr_{\frac{4( 1 - \theta)}{\theta}, \Om} \le c \, \nr \na^2 h \nr_{2,\Om}.
$$
Thus, by Lemma \ref{lem:L2-estimate-oscillation} we conclude that \eqref{W22-stability} holds with $\tau_4= 1- \theta$.

(iii) Let $N\ge 5$. Applying \eqref{John-harmonic-poincare} with $r= \frac{2N}{N-4}$, $p= \frac{2N}{N-2}$, $\al=0$, and item (i) of Corollary \ref{cor:JohnPoincareaigradienti} with $r={\frac{2N}{N-2}}$, $p=2$, $\al=0$, by choosing $v=h$ we get that
$$
\nr h - h_{ \Om} \nr_{\frac{2N}{N-4},\Om} \le c \, \nr\na^2 h\nr_{2,\Om}.
$$
By Lemma \ref{lem:L2-estimate-oscillation}, we have that \eqref{W22-stability} holds
true with $\tau_N = 2/(N-2)$.
\end{proof}

Other possible choices for the point $z$ are described in the next remark.

\par

\begin{rem}
\label{remarkprova:nuova scelte z}
{\rm
(i) We can choose $z$ as the center of mass of $\Om$. 
In fact, if $z$ is the center of mass of $\Om$, we have that
\begin{multline*}
\int_\Om \na h(x)\,dx=\int_\Om [x-z-\na u(x)]\,dx=\\
\int_\Om x\,dx-|\Om|\,z-\int_\Ga u(x)\,\nu(x)\,dS_x=0.
\end{multline*}
Thus, we can use item (ii) of Corollary \ref{cor:JohnPoincareaigradienti} instead of item (i). In this way, in the estimates of Theorems \ref{thm:serrin-W22-stability} and \ref{thm:SBT-W22-stability} we simply obtain the same constants with $\mu_{r,p, \al}(\Om,z)$ replaced by $\ol{\mu}_{r, p, \al}(\Om)$. 
%
%

It should be noticed that, in this case the extra assumption that $z \in \Om$ is needed, since we want that the ball $B_{\rho_i}(z)$ be contained in $\Om$. 
\par
(ii) Following \cite{Fe}, another possible way to choose $z$ is $z= x_0 - \na u (x_0)$,
where $x_0 \in \Om$ is any point such that $\de_\Ga (x_0) \ge r_i$. In fact, we obtain that $\na h (x_0)=0$ and we can thus use \eqref{John-harmonic-quasi-poincare}, with $\mu_{r,p,\al}(\Om,z)$ replaced by $\mu_{r,p,\al}(\Om,x_0)$.
%
%
\par
As in item (i), we should additionally require that $z \in \Om$, to be sure that the ball $B_{\rho_i}(z)$ be contained in $\Om$.
}
\end{rem}

\section{Stability results}
\label{sec:stability results}

In this section, we collect our results on the stability of the spherical configuration by putting together the identities presented in the introduction and already obtained in \cite{MP,MP2} with the new estimates obtained in Section \ref{sec:estimates for harmonic}.

\subsection{Stability for Serrin's overdetermined problem}\label{subsec:stability Serrin}

%
By using \eqref{L2-norm-hessian}, \eqref{idwps} can be rewritten in terms of the harmonic function $h$ as 
\begin{equation}
\label{idwps-h}
\int_{\Om} (-u)\, |\na ^2 h|^2\,dx=
\frac{1}{2}\,\int_\Ga ( R^2-u_\nu^2)\, h_\nu\,dS_x .
\end{equation}

%
%
%
%
%

\par
In light of \eqref{eq:relationdist}, Theorem \ref{thm:serrin-W22-stability} gives an estimate from below of the left-hand side of \eqref{idwps-h}. Now, we will take care of its right-hand side and prove our main result for Serrin's problem. The result that we present here, improves (for every $N \ge 2$) the exponents in the estimate obtained in \cite[Theorem 1.1]{MP2}.

\begin{thm}[Stability for Serrin's problem]
\label{thm:Improved-Serrin-stability}
Let $\Om\subset\RR^N$, $N\ge2$, be a bounded domain with boundary $\Ga$ of class $C^2$ and $R$ be the constant defined in \eqref{def-R-H0}.
Let $u$ be the solution of problem \eqref{serrin1} and 
$z\in\Om$ be a global minimum point.
\par
There exists a positive constant $C$ such that
\begin{equation}
\label{general improved stability serrin C}
\rho_e-\rho_i\le C\,\nr u_\nu - R \nr_{2,\Ga}^{\tau_N} ,
\end{equation}
with the following specifications:
%
%
%
\begin{enumerate}[(i)]
\item $\tau_2 = 1$;
\item $\tau_3$ is arbitrarily close to one, in the sense that for any $\theta>0$, there exists a positive constant $C$ such that  \eqref{general improved stability serrin C} holds with $\tau_3 = 1- \theta$;
\item $\tau_N = 2/(N-1)$ for $N \ge 4$.
\end{enumerate}

The constant $C$ depends on $N$, $r_i$, $r_e$, $d_\Om$,
and $\theta$ (only in the case $N=3$).
If $\Ga$ is mean convex the dependence on $r_e$ can be removed.
\end{thm}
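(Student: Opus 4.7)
The plan is to combine the integral identity \eqref{idwps-h} with the weighted $W^{2,2}$-estimate of Theorem \ref{thm:serrin-W22-stability}. In two steps I would first show
\begin{equation*}
\nr \de_\Ga^{1/2}\,\na^2 h\nr_{2,\Om}\le C\,\nr u_\nu-R\nr_{2,\Ga},
\end{equation*}
and then invoke Theorem \ref{thm:serrin-W22-stability} to obtain the desired $\tau_N$-exponent, noting that the choice of $z$ as a global minimum of $u$ (hence a critical point of $h$, since $\na h=x-z-\na u$) is exactly the hypothesis required by that theorem.

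For the first step, I would start from \eqref{idwps-h} and factor the right-hand side as
\begin{equation*}
\frac12\int_\Ga (R-u_\nu)(R+u_\nu)\,h_\nu\,dS_x.
\end{equation*}
The factor $R+u_\nu$ is bounded in $L^\infty(\Ga)$ by $2M$, which is controlled through \eqref{bound-M} (or \eqref{bound-M-convex} in the mean convex case); also $R\le M$ since $R$ is the average of $u_\nu$ on $\Ga$. Applying Cauchy--Schwarz yields
\begin{equation*}
\int_\Om (-u)\,|\na^2 h|^2\,dx \le 2M\,\nr u_\nu-R\nr_{2,\Ga}\,\nr h_\nu\nr_{2,\Ga}.
\end{equation*}
Next, since $\na h(z)=0$, Lemma \ref{lem:genericv-trace inequality}(i) applied to $v=h$ gives
\begin{equation*}
\nr h_\nu\nr_{2,\Ga}^2\le \nr \na h\nr_{2,\Ga}^2\le C'\int_\Om (-u)\,|\na^2 h|^2\,dx.
\end{equation*}
Plugging this back and dividing yields
\begin{equation*}
\int_\Om (-u)\,|\na^2 h|^2\,dx \le C''\,\nr u_\nu-R\nr_{2,\Ga}^2,
\end{equation*}
and the lower bound \eqref{eq:relationdist} then gives the promised control on $\nr\de_\Ga^{1/2}\na^2 h\nr_{2,\Om}$.

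The conclusion \eqref{general improved stability serrin C} follows by raising this estimate to the power $\tau_N$ and applying Theorem \ref{thm:serrin-W22-stability}, which absorbs the dimension dependence of the exponent. The geometric dependence of the constant on $N$, $r_i$, $r_e$, $d_\Om$ (and $\theta$ when $N=3$) is inherited from Theorem \ref{thm:serrin-W22-stability}, Lemma \ref{lem:genericv-trace inequality}, and the bound \eqref{bound-M}. The main substantive work has already been done upstream; the only potentially delicate point here is verifying that the absorption argument (passing $\nr h_\nu\nr_{2,\Ga}$ to the left side via the trace inequality) yields a clean quadratic control, which is what makes the norm $\nr u_\nu-R\nr_{2,\Ga}$ (rather than its square root) appear on the right-hand side of the final estimate.
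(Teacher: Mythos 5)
Your proposal is correct and follows essentially the same route as the paper: it combines \eqref{idwps-h} with the trace inequality of Lemma \ref{lem:genericv-trace inequality}(i) to absorb $\nr h_\nu\nr_{2,\Ga}$ and obtain $\int_\Om(-u)|\na^2h|^2\,dx\le C\nr u_\nu-R\nr_{2,\Ga}^2$, then applies \eqref{eq:relationdist} and Theorem \ref{thm:serrin-W22-stability}. The only (harmless) cosmetic differences are using the bound $R\le M$ in place of the paper's isoperimetric estimate of $R$ and performing the absorption in one step rather than first isolating the intermediate inequality \eqref{ineq-feldman}.
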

\begin{proof}
We have that 
$$
\int_\Ga ( R^2-u_\nu^2)\, h_\nu\,dS_x\le (M+R)\,\nr u_\nu-R\nr_{2,\Ga} \nr h_\nu\nr_{2,\Ga},
$$
after an an application of H\"older's inequality.
Thus, by item (i) of Lemma \ref{lem:genericv-trace inequality} with $v=h$, \eqref{idwps-h}, and this inequality, we infer that
\begin{multline*}
\nr h_\nu\nr_{2,\Ga}^2\le \frac{2}{r_i} \left(1+\frac{N}{r_i\, \mu_{2,2,\frac{1}{2} }(\Om,z)^2 } \right)  \int_{\Om} (-u) |\na^2h|^2 dx \le \\ 
\frac{M+R}{r_i} \left(1+\frac{N}{r_i\, \mu_{2,2,\frac{1}{2} }(\Om,z)^2} \right)\nr u_\nu-R\nr_{2,\Ga} \nr h_\nu\nr_{2,\Ga},
\end{multline*}
and hence
\begin{equation}
\label{ineq-feldman}
\nr h_\nu\nr_{2,\Ga}\le \frac{M+R}{r_i} \left(1+\frac{N}{r_i\, \mu_{2,2,\frac{1}{2} }(\Om,z)^2} \right)\nr u_\nu-R\nr_{2,\Ga}.
\end{equation}
Therefore,
\begin{multline*}
\int_\Om |\na ^2 h|^2 \de_\Ga (x)\, dx \le \frac{2}{r_i}\int_\Om (-u) |\na^2 h|^2 dx \le \\
 \left(\frac{M+R}{r_i}\right)^2 \left(1+\frac{N}{r_i \, \mu_{2,2,\frac{1}{2} }(\Om,z)^2} \right)\nr u_\nu-R\nr_{2,\Ga}^2,
\end{multline*}
by \eqref{eq:relationdist}.
%
%
These inequalities and Theorem \ref{thm:serrin-W22-stability} then give the desired conclusion.

We recall that $\mu_{2,2,\frac{1}{2} }(\Om,z)$ appearing in the constant in the last inequality can be estimated in terms of $d_\Om$ and $r_i$
%
%
by proceeding as described in Remark \ref{rem:dipendenzecostanti}.
The ratio $R$ can be estimated (from above) in terms of $|\Om|^{1/N}$ -- just by using the isoperimetric inequality; in turn, $|\Om|^{1/N}$ can be bounded in terms of $d_\Om$ by proceeding as described in Remark \ref{rem:dipendenzecostanti}. 
Finally, as usual, $M$ can be estimated by means of \eqref{bound-M}.
%
%
\end{proof}

If we want to measure the deviation of $u_\nu$ from $R$ in $L^1$-norm, we get a smaller (reduced by one half) stability exponent. The following result improves \cite[Theorem 3.6]{MP2}.

\begin{thm}[Stability with $L^1$-deviation]
\label{thm:Serrin-stability}
Theorem \ref{thm:Improved-Serrin-stability} still holds
with \eqref{general improved stability serrin C} replaced by 
\begin{equation*}
\rho_e-\rho_i\le C\,\nr u_\nu - R \nr_{1,\Ga}^{\tau_N/2} .
\end{equation*}
%
%
%
\end{thm}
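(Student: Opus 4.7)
The plan is to mimic the proof of Theorem \ref{thm:Improved-Serrin-stability} but avoid the Cauchy--Schwarz split that produced the factor $\nr h_\nu\nr_{2,\Ga}$. Instead, I would estimate the boundary integral on the right-hand side of \eqref{idwps-h} by pulling out $h_\nu$ in sup-norm. Concretely, starting from
\begin{equation*}
\int_{\Om} (-u)\, |\na^2 h|^2\,dx = \frac12\,\int_\Ga (R^2 - u_\nu^2)\, h_\nu\,dS_x,
\end{equation*}
I would bound the right-hand side by
\begin{equation*}
\frac12\,(M+R)\,\max_{\ol{\Om}}|\na h|\,\nr u_\nu - R\nr_{1,\Ga}
\le \frac12\,(M+R)(M+d_\Om)\,\nr u_\nu - R\nr_{1,\Ga},
\end{equation*}
using the already noted pointwise bound $|\na h| \le M + d_\Om$ on $\ol{\Om}$ (and that $h$ is harmonic, so its gradient attains its maximum on $\Ga$).

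Combining this with the lower bound $-u \ge (r_i/2)\,\de_\Ga$ from \eqref{eq:relationdist}, I obtain
\begin{equation*}
\int_\Om \de_\Ga\,|\na^2 h|^2\,dx \le \frac{(M+R)(M+d_\Om)}{r_i}\,\nr u_\nu - R\nr_{1,\Ga},
\end{equation*}
which rewrites as
\begin{equation*}
\nr \de_\Ga^{1/2}\,\na^2 h\nr_{2,\Om}^2 \le C\,\nr u_\nu - R\nr_{1,\Ga}.
\end{equation*}
Feeding this into the key estimate \eqref{eq:C-provastab-serrin-W22} of Theorem \ref{thm:serrin-W22-stability} yields
\begin{equation*}
\rho_e - \rho_i \le C\,\nr \de_\Ga^{1/2}\,\na^2 h\nr_{2,\Om}^{\tau_N}\le C\,\nr u_\nu - R\nr_{1,\Ga}^{\tau_N/2},
\end{equation*}
which is the claimed estimate with the same $\tau_N$ as in Theorem \ref{thm:Improved-Serrin-stability}.

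The geometric dependence of the constants is handled exactly as in the proof of Theorem \ref{thm:Improved-Serrin-stability}: $M$ is controlled by \eqref{bound-M} (or \eqref{bound-M-convex} in the mean convex case, removing the dependence on $r_e$), $R$ is estimated via the isoperimetric inequality as noted there, and Theorem \ref{thm:serrin-W22-stability} provides the remaining bookkeeping. There is essentially no obstacle here; the only substantive change compared to the $L^2$-case is that the trace inequality of Lemma \ref{lem:genericv-trace inequality} is no longer needed, since the $L^\infty$-bound on $|\na h|$ absorbs $h_\nu$ directly. The loss of a factor of two in the exponent is the price paid for using this crude bound in place of the sharper $L^2$-control of $h_\nu$ on $\Ga$.
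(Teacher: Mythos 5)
Your proposal matches the paper's proof essentially verbatim: the paper also replaces the H\"older/Cauchy--Schwarz split by the crude pointwise bound $(u_\nu+R)\,|h_\nu|\le (M+R)(M+d_\Om)$ on $\Ga$, arrives at $\int_\Om(-u)|\na^2 h|^2\,dx\le \tfrac12(M+R)(M+d_\Om)\nr u_\nu-R\nr_{1,\Ga}$, and then invokes \eqref{eq:relationdist} and Theorem~\ref{thm:serrin-W22-stability} exactly as you do. Your observation that Lemma~\ref{lem:genericv-trace inequality} drops out of the argument, and that the halved exponent is the price of the sup-norm bound on $h_\nu$, is also a faithful account of what is going on.
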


\begin{proof}
Instead of applying H\"older's inequality to the right-hand side of \eqref{idwps-h}, we just use the rough bound:
%
%
\begin{equation*}
\int_{\Om} (-u)\, |\na ^2 h|^2\,dx \le
\frac{1}{2}\, \left( M + R \right)\,(M + d_{\Om}) \, \int_\Ga \left| u_\nu - R \right| \, dS_x,
\end{equation*}
since $(u_\nu+R)\,|h_\nu|\le ( M + R)\,(M + d_{\Om})$ on $\Ga$. The conclusion then follows from similar arguments.
\end{proof}

\begin{rem}\label{rem:serrin-stimequantitativefissandoparametri}
{\rm
The estimates presented in Theorems \ref{thm:Improved-Serrin-stability}, \ref{thm:Serrin-stability} may be interpreted as stability estimates, once that we fixed some a priori bounds on the relevant parameters: here, we just illustrate the case of Theorem \ref{thm:Improved-Serrin-stability}. 
Given two positive constants $\ol{d}$ and $\ul{r}$, let ${\mathcal D}={\mathcal D} (\ol{d}, \ul{r} )$ be the class of bounded domains $\Om\subset\RR^N$ with boundary of class $C^{2}$, such that 
$$
d_{ \Om} \le \ol{d}, \quad r_i(\Om), \ r_e(\Om)\ge \ul{r} .
$$
Then, for every $\Om\in {\mathcal D} $, we have that
$$
\rho_e-\rho_i\le C\, \nr u_\nu - R \nr_{2,\Ga}^{\tau_N},
$$
where $\tau_N$ is that appearing in \eqref{general improved stability serrin C} and $C$ is a constant depending on $N$, $\ol{d}$, $\ul{r}$ (and $\theta$ only in the case $N=3$).
\par
If we relax the a priori assumption that $\Om\in {\mathcal D}$ (in particular if we remove the lower bound $\ul{r}$), it may happen that, 
as the deviation $\nr u_\nu - R \nr_{2,\Ga}$ tends to $0$, $\Om$ tends to the ideal configuration of two or more disjoint balls, while $C$ diverges since $\ul{r}$ tends to $0$. The configuration of more balls connected with tiny (but arbitrarily long) tentacles has been quantitatively studied in \cite{BNST}.
}
\end{rem}

\subsection{Stability for Alexandrov's Soap Bubble Theorem}\label{subsec:Alexandrov's SBT}
This section is devoted to the stability issue for the Soap Bubble Theorem.

As we have already noticed in \cite[Theorem 2.4]{MP2}, the right-hand side of \eqref{H-fundamental} can be rewritten as 
%
%
$$
\int_{\Ga}(H_0-H)\,(u_\nu-q_\nu)\,u_\nu\,dS_x+
\int_{\Ga}(H_0-H)\, (u_\nu-R)\,q_\nu\, dS_x.
$$
Hence, by recalling \eqref{L2-norm-hessian}, \eqref{H-fundamental} becomes
\begin{multline}
\label{identity-SBT-h}
\frac1{N-1}\int_{\Om} |\na ^2 h|^2 dx+
\frac1{R}\,\int_\Ga (u_\nu-R)^2 dS_x = \\
-\int_{\Ga}(H_0-H)\,h_\nu\,u_\nu\,dS_x+
\int_{\Ga}(H_0-H)\, (u_\nu-R)\,q_\nu\, dS_x.
\end{multline}

%
%

Next, we derive the following lemma, that parallels and is a useful consequence of Lemma \ref{lem:genericv-trace inequality}.

\begin{lem}
\label{lem:improving-SBT}
Let $\Om\subset\RR^N$, $N\ge2$, be a bounded domain with boundary $\Ga$ of class $C^2$.
%
%
Denote by $H$ the mean curvature of $\Ga$ and let $H_0$ be the constant defined in \eqref{def-R-H0}.
\par
Then, the following inequality holds:
\begin{equation}
\label{improving-SBT}
\nr  u_\nu-R\nr_{2,\Ga} \le 
R\left\{ d_\Om +\frac{M (M+R)}{r_i}\left(1+\frac{N}{r_i \, \mu_{2,2,\frac{1}{2} }(\Om,z)^2 }\right)\right\} \nr H_0-H\nr_{2,\Ga}.
\end{equation}
\end{lem}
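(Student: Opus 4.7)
The plan is to combine the identity \eqref{identity-SBT-h} with the auxiliary trace inequality \eqref{ineq-feldman}, which was established inside the proof of Theorem \ref{thm:Improved-Serrin-stability} and already controls $\nr h_\nu\nr_{2,\Ga}$ by a constant multiple of $\nr u_\nu-R\nr_{2,\Ga}$.

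First I would discard the nonnegative term $\tfrac{1}{N-1}\int_\Om |\na^2 h|^2\,dx$ on the left-hand side of \eqref{identity-SBT-h}, retaining only $\tfrac{1}{R}\,\nr u_\nu-R\nr_{2,\Ga}^2$. Next, Cauchy--Schwarz applied to each of the two boundary integrals on the right-hand side, combined with the pointwise bounds $|u_\nu|\le M$ on $\Ga$ (from \eqref{bound-gradient}) and $|q_\nu(x)|=|\lan x-z,\nu(x)\ran|\le|x-z|\le d_\Om$ for $x\in\Ga$ (since $z\in\Om$), would yield
\begin{equation*}
\tfrac{1}{R}\,\nr u_\nu-R\nr_{2,\Ga}^2 \le \nr H_0-H\nr_{2,\Ga}\Bigl(M\,\nr h_\nu\nr_{2,\Ga} + d_\Om\,\nr u_\nu-R\nr_{2,\Ga}\Bigr).
\end{equation*}

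Then I would substitute \eqref{ineq-feldman} to replace $\nr h_\nu\nr_{2,\Ga}$ with $\tfrac{M+R}{r_i}\bigl(1+\tfrac{N}{r_i\,\mu_{2,2,1/2}(\Om,z)^2}\bigr)\nr u_\nu-R\nr_{2,\Ga}$. The combined coefficient of $\nr H_0-H\nr_{2,\Ga}\,\nr u_\nu-R\nr_{2,\Ga}$ thus becomes exactly $d_\Om+\tfrac{M(M+R)}{r_i}\bigl(1+\tfrac{N}{r_i\,\mu_{2,2,1/2}(\Om,z)^2}\bigr)$, matching \eqref{improving-SBT}. Dividing both sides by $\nr u_\nu-R\nr_{2,\Ga}$ (the case $\nr u_\nu-R\nr_{2,\Ga}=0$ being trivial, since the inequality holds with any constant) and multiplying through by $R$ closes the argument.

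The main (mild) conceptual point is recognizing that the Serrin-type identity \eqref{idwps-h} has already been absorbed into \eqref{ineq-feldman}; this is what allows us to safely discard the Hessian integral in \eqref{identity-SBT-h} rather than having to relate the unweighted integral $\int_\Om|\na^2 h|^2\,dx$ to the $(-u)$-weighted one via Lemma \ref{lem:genericv-trace inequality}, which would otherwise be the natural technical obstacle. Once this reduction is spotted, the proof collapses to two Cauchy--Schwarz inequalities and straightforward bookkeeping.
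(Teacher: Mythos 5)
Your proposal is correct and coincides with the paper's own proof: discard the Hessian term in \eqref{identity-SBT-h}, apply H\"older's inequality with the bounds $u_\nu\le M$ and $|q_\nu|\le d_\Om$ on $\Ga$, insert \eqref{ineq-feldman}, and cancel a factor of $\nr u_\nu-R\nr_{2,\Ga}$. Your observation that \eqref{idwps-h} has already been absorbed into \eqref{ineq-feldman} is exactly the mechanism the paper relies on, so there is nothing to add.
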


\begin{proof}
Discarding the first summand on the left-hand side of \eqref{identity-SBT-h} and applying H\"older's inequality on its right-hand side gives that
$$
\frac1{R} \nr  u_\nu-R\nr_{2,\Ga}^2 \le \nr H_0-H\nr_{2,\Ga}\left( M \nr h_\nu\nr_{2,\Ga} + d_\Om \,\nr  u_\nu-R\nr_{2,\Ga} \right),
$$ 
since $u_\nu\le M$ and $|q_\nu|\le d_\Om $ on $\Ga$. Thus, inequality \eqref{ineq-feldman}
implies that
\begin{multline*}
\nr  u_\nu-R\nr_{2,\Ga}^2 \le 
\\
R\left\{ d_\Om +\frac{M (M+R)}{r_i}\left(1+\frac{N}{r_i \, \mu_{2,2,\frac{1}{2} }(\Om,z)^2}\right)\right\} \nr H_0-H\nr_{2,\Ga} \nr  u_\nu-R\nr_{2,\Ga},
\end{multline*}
from which \eqref{improving-SBT} follows at once.
\end{proof}

We are now ready to prove our main result for Alexandrov's Soap Bubble Theorem. The result that we present here, improves (for every $N \ge 4$) the exponents of estimates obtained in \cite[Theorem 1.2]{MP2}.

\begin{thm}[Stability for the Soap Bubble Theorem] 
\label{thm:SBT-improved-stability}
Let $N\ge 2$ and let $\Ga$ be a surface of class $C^{2}$, which is the boundary of a bounded domain $\Om\subset\RR^N$. Denote by $H$ the mean curvature of $\Ga$ and let $H_0$ be the constant defined in \eqref{def-R-H0}.
\par
Then,
%
%
for some point $z\in\Om$
there exists a positive constant $C$ such that
\begin{equation}
\label{SBT-improved-stability-C}
\rho_e-\rho_i \le C\,\nr H_0-H\nr_{2,\Ga}^{\tau_N} ,
\end{equation}
with the following specifications:
%
%
%
\begin{enumerate}[(i)]
\item $\tau_N=1$ for $N=2$ or $3$;
\item $\tau_4$ is arbitrarily close to one, in the sense that for any $\theta>0$, there exists a positive constant $C$ such that \eqref{SBT-improved-stability-C} holds with $\tau_4= 1- \theta $;
\item $\tau_N = 2/(N-2) $ for $N\ge 5$.
\end{enumerate}

The constant $C$ depends on $N$, $r_i$, $r_e$, $d_\Om$, and $\theta$ (only in the case $N=4$).
If $\Ga$ is mean convex the dependence on $r_e$ can be removed.
\end{thm}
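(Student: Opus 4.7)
The plan is to use the integral identity \eqref{identity-SBT-h} to bound $\|\na^2 h\|_{2,\Om}$ in terms of $\|H_0 - H\|_{2,\Ga}$, and then to conclude via Theorem \ref{thm:SBT-W22-stability}. As in that theorem, I would choose $z\in\Om$ to be a global minimum point of $u$; this ensures that $\na u(z)=0$ and hence $\na h(z)=0$, which is precisely what Theorem \ref{thm:SBT-W22-stability} and Lemma \ref{lem:genericv-trace inequality}(i) require.

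The first step is to apply the Cauchy--Schwarz inequality to the right-hand side of \eqref{identity-SBT-h}; using the pointwise bounds $u_\nu \le M$ and $|q_\nu| \le d_\Om$ on $\Ga$, one obtains
\begin{multline*}
\frac{1}{N-1}\, \|\na^2 h\|_{2,\Om}^2 + \frac{1}{R}\, \|u_\nu - R\|_{2,\Ga}^2 \\
\le M\, \|H_0 - H\|_{2,\Ga}\, \|h_\nu\|_{2,\Ga} + d_\Om\, \|H_0 - H\|_{2,\Ga}\, \|u_\nu - R\|_{2,\Ga}.
\end{multline*}
The trace bound \eqref{ineq-feldman}, already established in the proof of Lemma \ref{lem:improving-SBT}, gives $\|h_\nu\|_{2,\Ga} \le C\, \|u_\nu - R\|_{2,\Ga}$; and Lemma \ref{lem:improving-SBT} itself gives $\|u_\nu - R\|_{2,\Ga} \le C'\, \|H_0 - H\|_{2,\Ga}$, with $C,C'$ depending only on $N$, $d_\Om$, $r_i$, $r_e$. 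Substituting these into the display, the right-hand side is $O(\|H_0 - H\|_{2,\Ga}^2)$, and after discarding the non-negative $\frac{1}{R}\|u_\nu-R\|_{2,\Ga}^2$ one concludes that
\begin{equation*}
\|\na^2 h\|_{2,\Om}^2 \le C''\, \|H_0 - H\|_{2,\Ga}^2.
\end{equation*}

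Inserting this bound into \eqref{W22-stability} from Theorem \ref{thm:SBT-W22-stability} (applied at the chosen $z$) yields \eqref{SBT-improved-stability-C} with $\tau_N$ exactly as stated: $\tau_N=1$ for $N=2,3$, $\tau_4=1-\theta$ for any $\theta>0$ in dimension four, and $\tau_N=2/(N-2)$ for $N\ge 5$.

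No essential new obstacle arises beyond what has been handled already; the main bookkeeping task is to ensure that the constants depend only on the parameters $N$, $d_\Om$, $r_i$, $r_e$ (and $\theta$ when $N=4$). This is carried out as in the proof of Theorem \ref{thm:Improved-Serrin-stability}: the constant $\mu_{2,2,\frac{1}{2}}(\Om,z)^{-1}$ appearing in Lemma \ref{lem:genericv-trace inequality}(i) and in Lemma \ref{lem:improving-SBT} is estimated via item (iii) of Remark \ref{rem:stime mu HS in item ii} together with the lower bound \eqref{eq:lowerbounddist} for $\de_\Ga(z)$; $M$ is bounded via \eqref{bound-M}, or via \eqref{bound-M-convex} when $\Ga$ is mean convex (which removes the dependence on $r_e$); and $R$ is controlled from above in terms of $d_\Om$ by the isoperimetric inequality. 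The improvement over the exponent $2/(N+2)$ of \cite[Theorem 1.2]{MP2} in high dimensions is therefore entirely absorbed into the sharper $L^2$-Hessian estimate of Theorem \ref{thm:SBT-W22-stability}, which is the novel ingredient.
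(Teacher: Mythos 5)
Your proposal is correct and follows essentially the same route as the paper: the paper likewise chooses $z$ as a global minimum point of $u$, applies H\"older's inequality to the right-hand side of \eqref{identity-SBT-h} with $u_\nu\le M$ and $|q_\nu|\le d_\Om$, invokes \eqref{ineq-feldman} and Lemma \ref{lem:improving-SBT} to get $\|\na^2 h\|_{2,\Om}\le C\|H_0-H\|_{2,\Ga}$ after discarding the nonnegative term $\tfrac1R\|u_\nu-R\|_{2,\Ga}^2$, and concludes via Theorem \ref{thm:SBT-W22-stability}. Your bookkeeping of the constants (via Remark \ref{rem:stime mu HS in item ii}, \eqref{eq:lowerbounddist}, \eqref{bound-M}, \eqref{bound-M-convex}, and the isoperimetric bound on $R$) also matches the paper's.
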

\begin{proof}
As before, we choose $z\in\Om$ to be a global minimum point of $u$.
Discarding the second summand on the left-hand side of \eqref{identity-SBT-h} and applying H\"older's inequality on its right-hand side, as in the previous proof, gives that
\begin{multline*}
\frac1{N-1}\,\int_\Om |\na^2 h|^2 dx\le \\ 
R\left\{ d_\Om +\frac{M (M+R)}{r_i}\left(1+\frac{N}{r_i \, \mu_{2,2,\frac{1}{2} }(\Om,z)^2}\right)\right\} \nr H_0-H\nr_{2,\Ga} \nr  u_\nu-R\nr_{2,\Ga}, \le \\
R^2\left\{ d_\Om +\frac{M (M+R)}{r_i}\left(1+\frac{N}{r_i \, \mu_{2,2,\frac{1}{2} }(\Om,z)^2 }\right)\right\}^2 \nr H_0-H\nr_{2,\Ga}^2,
\end{multline*}
where the second inequality follows from Lemma \ref{lem:improving-SBT}.
\par
The conclusion then follows from
Theorem \ref{thm:SBT-W22-stability}.
\end{proof}

\begin{rem}
{\rm
Estimates similar to those of Theorem \ref{thm:SBT-improved-stability} can also be obtained as a direct corollary of Theorem \ref{thm:Improved-Serrin-stability}, by means of \eqref{improving-SBT}. As it is clear, in this way the exponents $\tau_N$ would be worse than those obtained in Theorem \ref{thm:SBT-improved-stability}.
}
\end{rem}

We now present a stability result with a weaker deviation (at the cost of getting a smaller stability exponent), analogous to Theorem \ref{thm:Serrin-stability}.
To this aim, we notice that from 
\eqref{H-fundamental} and \eqref{L2-norm-hessian} 
%
%
we can easily deduce the following inequality 
%
%
%
\begin{equation}
\label{fundamental-stability2} 
\frac1{N-1}\int_{\Om} |\na^2 h |^2 \,dx + \int_{\Ga}(H_0-H)^-\, (u_\nu)^2\,dS_x \le 
\int_{\Ga}(H_0-H)^+\, (u_\nu)^2\,dS_x
\end{equation}
(here, we use the positive and negative part functions $(t)^+=\max(t,0)$ and $(t)^-=\max(-t,0)$). That inequality tells us that, if we have an {\it a priori} bound $M$ for
$u_\nu$ on $\Ga$, then its left-hand side is small if the integral
\begin{equation}\label{eq:provadeviationpartepositiva}
\int_{\Ga}(H_0-H)^+\,dS_x
\end{equation}
is also small.  
%
%
%
It is clear that \eqref{eq:provadeviationpartepositiva} is a deviation weaker than $\nr H_0-H\nr_{1,\Ga}$.

The result that we present here, improves (for every $N \ge 4$) the estimates obtained in \cite[Theorem 4.1]{MP}.

\begin{thm}[Stability with $L^1$-type deviation]
\label{thm:SBT-stability}
Theorem \ref{thm:SBT-improved-stability} still holds with \eqref{SBT-improved-stability-C} replaced by
\begin{equation*}
\rho_e-\rho_i \le C\,  \left\lbrace \int_\Ga (H_0-H)^+\,dS_x \right\rbrace^{\tau_N /2} .
\end{equation*}
\end{thm}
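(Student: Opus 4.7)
The plan is to combine inequality \eqref{fundamental-stability2} with Theorem \ref{thm:SBT-W22-stability}, choosing $z\in\Om$ to be a global minimum point of $u$ (so that $\na h(z)=0$ and Theorem \ref{thm:SBT-W22-stability} applies directly). The strategy is strictly analogous to the proof of Theorem \ref{thm:Serrin-stability}: instead of using H\"older's inequality on the boundary integral, we use the cruder pointwise bound $u_\nu\le M$ on $\Ga$, which trades an $L^2$-deviation for an $L^1$-type deviation and produces the halved exponent.

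Concretely, I would first discard the nonnegative second term on the left-hand side of \eqref{fundamental-stability2} and bound the right-hand side pointwise by $M^2\,(H_0-H)^+$, obtaining
\begin{equation*}
\frac{1}{N-1}\int_\Om |\na^2 h|^2\,dx \le M^2 \int_\Ga (H_0-H)^+\,dS_x .
\end{equation*}
This gives a bound of the form
\begin{equation*}
\nr \na^2 h\nr_{2,\Om}^2 \le (N-1)\,M^2 \int_\Ga (H_0-H)^+\,dS_x .
\end{equation*}

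Next, I would invoke Theorem \ref{thm:SBT-W22-stability}, which yields
\begin{equation*}
\rho_e-\rho_i \le C\,\nr\na^2 h\nr_{2,\Om}^{\tau_N},
\end{equation*}
with $\tau_N$ as specified in the statement (namely $\tau_N=1$ for $N=2,3$; $\tau_4=1-\theta$; $\tau_N=2/(N-2)$ for $N\ge 5$) and with a constant $C$ depending only on $N,r_i,r_e,d_\Om$ (and $\theta$ when $N=4$), with the dependence on $r_e$ dropped when $\Ga$ is mean convex. Combining the two displays and raising to the power $\tau_N/2$ gives
\begin{equation*}
\rho_e-\rho_i \le C'\,\left\{\int_\Ga (H_0-H)^+\,dS_x\right\}^{\tau_N/2},
\end{equation*}
where the new constant $C'$ absorbs the factor $\left[(N-1)M^2\right]^{\tau_N/2}$; the quantity $M$ is under control via \eqref{bound-M} (or \eqref{bound-M-convex} in the mean convex case), so $C'$ has the same structural dependencies claimed in the theorem.

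There is essentially no serious obstacle here: every nontrivial estimate has already been done (the $W^{2,2}$-stability of Theorem \ref{thm:SBT-W22-stability} and the integral identity \eqref{H-fundamental} which yields \eqref{fundamental-stability2}). The only point that requires a little care is verifying that the constant $C'$ depends only on the admissible geometric parameters; this follows exactly as in Remark \ref{rem:dipendenzecostanti} and in the proof of Theorem \ref{thm:SBT-improved-stability}, since $M$, $R$, and $\mu_{2,2,1/2}(\Om,z)$ are all controllable in terms of $N$, $d_\Om$, $r_i$, and $r_e$.
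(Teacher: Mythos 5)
Your proposal is correct and follows essentially the same route as the paper: discard the nonnegative second summand in \eqref{fundamental-stability2}, bound $(u_\nu)^2$ by $M^2$ to get $\nr \na^2 h\nr_{2,\Om}\le M\sqrt{N-1}\,\{\int_\Ga (H_0-H)^+\,dS_x\}^{1/2}$, and then apply Theorem \ref{thm:SBT-W22-stability} with $z$ a global minimum point of $u$. Your remarks on controlling $M$ via \eqref{bound-M} (or \eqref{bound-M-convex}) and tracing the constant's dependence match the paper's treatment.
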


\begin{proof}
%
%
From \eqref{fundamental-stability2}, we infer that
\begin{equation*}
\nr \na^2 h\nr_{2,\Om}\le M\,\sqrt{N-1}\,  \left\lbrace \int_\Ga (H_0-H)^+\,dS_x \right\rbrace^{1/2}
\end{equation*}
%
%

The conclusion then follows from Theorem \ref{thm:SBT-W22-stability}.
\end{proof}

%
%
%
%
%
%
%
%

\begin{rem}\label{rem:stimequantitativefissandoparametri}
{\rm
The estimates presented in Theorems \ref{thm:SBT-improved-stability}, \ref{thm:SBT-stability} may be interpreted as stability estimates, once some a priori information is available: here, we just illustrate the case of Theorem \ref{thm:SBT-improved-stability}. 
Given two positive constants $\ol{d}$ and $\ul{r}$, let $\cS=\cS(\ol{d}, \ul{r} )$ be the class of connected surfaces $\Ga\subset\RR^N$ of class $C^{2}$, where $\Ga$ is the boundary of a bounded domain $\Om$, such that 
$$
d_{ \Om} \le \ol{d}, \quad r_i(\Om), \ r_e(\Om)\ge \ul{r} .
$$
Then, for every $\Ga\in\cS$ we have that
$$
\rho_e-\rho_i\le C\, \nr H_0-H\nr_{2,\Ga}^{\tau_N},
$$
where $\tau_N$ is that appearing in \eqref{SBT-improved-stability-C} and the constant $C$ depends on $N$, $\ol{d}$, $\ul{r}$ (and $\theta$ only in the case $N=4$).
\par
If we relax the a priori assumption that $\Ga\in\cS$ (in particular if we remove the lower bound
$\ul{r}$), it may happen that, 
as the deviation $\nr H_0 - H\nr_{2,\Ga}$ tends to $0$, $\Om$ tends to the ideal configuration of two or more mutually tangent balls, while $C$ diverges since $\ul{r}$ tends to $0$. Such a configuration can be observed, for example, as limit of sets created by truncating (and then smoothly completing) unduloids with very thin necks. This phenomenon (called bubbling) has been quantitatively studied in \cite{CM} by considering strictly mean convex surfaces and by using the uniform deviation $\nr H_0 - H\nr_{\infty,\Ga}$.
}
\end{rem}

\subsection{Other related stability results}\label{subsec:Other stability results mean curvature}
As already noticed in \cite[Theorem 2.6]{MP}, if $\Ga$ is mean-convex, that is $H \ge 0$, \eqref{H-fundamental} can also be rearranged into the following identity:
\begin{equation}
\label{heintze-karcher-identity}
\frac1{N-1}\,\int_{\Om} \left\{ |\na ^2 u|^2-\frac{(\De u)^2}{N}\right\}\,dx +\int_\Ga\frac{(1-H\,u_\nu)^2}{H}\,dS_x =
\int_\Ga\frac{dS_x}{H}-N |\Om|.
\end{equation}
Such identity leads to Heintze-Karcher's inequality (\cite{HK}):
\begin{equation}
\label{heintze-karcher}
\int_\Ga \frac{dS_x}{H}\ge N |\Om| .
\end{equation}
In fact, since both summands at the left-hand side of \eqref{heintze-karcher-identity} are non-negative,
Heintze-Karcher's inequality \eqref{heintze-karcher} holds and the equality sign is attained if and only if $\Om$ is a ball.
In fact, if the right-hand side of \eqref{heintze-karcher-identity} is zero, both summands at the left-hand side must be zero, and the vanishing of the first summand implies that $\Om$ is a ball, as already noticed.

Thus, by putting together the identity \eqref{heintze-karcher-identity} and the tools developed in Section \ref{sec:estimates for harmonic} we obtain a stability result for Heintze-Karcher's inequality.
%
%
The following theorem improves (for every $N\ge 4$) the exponents obtained in \cite[Theorem 4.5]{MP}.
\begin{thm}[Stability for Heintze-Karcher's inequality]
\label{thm:stability-hk}
Let $\Ga$ be a surface of class $C^2$, which is the boundary of a bounded domain $\Om\subset\RR^N$, $N\ge 2$. Denote by $H$ its mean curvature and suppose that $H\ge0$ on $\Ga$.

Then, there exist a point $z\in\Om$ and a positive constant $C$ such that
\begin{equation}
\label{stability-hk-C}
\rho_e-\rho_i\le C\,\left(\int_\Ga\frac{dS_x}{H}-N\,|\Om|\right)^{\tau_N / 2} ,
\end{equation}
with the following specifications:
\begin{enumerate}[(i)]
\item $\tau_N=1$ for $N=2$ or $3$;
\item $\tau_4$ is arbitrarily close to one, in the sense that for any $\theta>0$, there exists a positive constant $C$ such that \eqref{stability-hk-C} holds with $\tau_4= 1- \theta $;
\item $\tau_N = 2/(N-2) $ for $N\ge 5$.
\end{enumerate}

The constant $C$ depends on $N$, $r_i$, $d_\Om$, and $\theta$ (only in the case $N=4$).
\end{thm}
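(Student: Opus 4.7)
The plan is to combine the integral identity \eqref{heintze-karcher-identity} directly with Theorem \ref{thm:SBT-W22-stability}, which has already done the heavy lifting of bounding $\rho_e-\rho_i$ in terms of the unweighted $L^2$-norm of $\na^2 h$.

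First, we choose $z\in\Om$ to be a global minimum point of the solution $u$ of \eqref{serrin1}, and we set $h=q-u$ with $q$ the quadratic polynomial in \eqref{quadratic} centered at this $z$. By the pointwise identity \eqref{L2-norm-hessian}, the first summand on the left-hand side of \eqref{heintze-karcher-identity} coincides with $\frac{1}{N-1}\int_\Om |\na^2 h|^2\,dx$. Since $H\ge 0$ by assumption, the second summand $\int_\Ga (1-H u_\nu)^2/H\,dS_x$ is non-negative, so discarding it yields
\begin{equation*}
\frac{1}{N-1}\int_\Om |\na^2 h|^2\,dx \le \int_\Ga \frac{dS_x}{H} - N|\Om|.
\end{equation*}

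Next, we apply Theorem \ref{thm:SBT-W22-stability} to the harmonic function $h$, which gives
\begin{equation*}
\rho_e-\rho_i \le C\,\|\na^2 h\|_{2,\Om}^{\tau_N}
\end{equation*}
with $\tau_N$ as specified in (i)--(iii) of the statement. Substituting the previous inequality yields
\begin{equation*}
\rho_e-\rho_i \le C\,(N-1)^{\tau_N/2}\left(\int_\Ga \frac{dS_x}{H} - N|\Om|\right)^{\tau_N/2},
\end{equation*}
which is the desired estimate \eqref{stability-hk-C}. The dependence of the constant on $r_e$ can be dropped because $\Ga$ is mean convex: this is exactly the improved version of Theorem \ref{thm:SBT-W22-stability}, whose constant in turn relies on the bound \eqref{bound-M-convex} for $M$ (valid for mean convex surfaces up to adjusting $c_N$) together with the estimates for $\mu_{r,p,\al}(\Om,z)$ and $\ol{\mu}_{r,p,\al}(\Om)$ discussed in Remark \ref{rem:dipendenzecostanti}.

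There is no real obstacle here: the result is a clean corollary of the machinery already built. The only point worth double-checking is that the constant $C$ in the final estimate depends only on $N$, $r_i$, $d_\Om$ (and $\theta$ when $N=4$), as claimed. Since mean convexity allows the $r_e$-free version of Theorem \ref{thm:SBT-W22-stability}, and since no other quantity enters the chain of inequalities beyond what already appears in that theorem, this dependence is automatic.
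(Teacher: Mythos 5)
Your proposal is correct and follows essentially the same route as the paper: choose $z$ as a global minimum point of $u$, discard the non-negative second summand in \eqref{heintze-karcher-identity} (using $H\ge 0$ and \eqref{L2-norm-hessian}) to bound $\frac{1}{N-1}\int_\Om|\na^2 h|^2\,dx$ by the Heintze--Karcher deficit, and conclude via Theorem \ref{thm:SBT-W22-stability}. Your remark on why mean convexity removes the $r_e$-dependence of the constant matches the paper's reasoning as well.
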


\begin{proof}
As before, we choose $z\in\Om$ to be a global minimum point of $u$.
Moreover, by \eqref{heintze-karcher-identity} and \eqref{heintze-karcher}, we have that 
$$
\frac1{N-1}\,\int_\Om |\na^2 h|^2\,dx\le \int_\Ga\frac{dS_x}{H}-N\,|\Om|.
$$
Thus, the conclusion follows from Theorem \ref{thm:SBT-W22-stability}.
%
%
%
\end{proof}

Since the deficit of Heintze-Karcher's inequality can be written as
$$
\int_\Ga\frac{dS_x}{H}-N\,|\Om| = \int_\Ga \left( \frac{1}{H} - u_\nu \right) \, dS_x ,
$$
where $u$ is always the solution of \eqref{serrin1},
Theorem \ref{thm:stability-hk} also gives symmetry and stability for the boundary value problem \eqref{serrin1} under the overdetermination 
\begin{equation}
u_\nu = \frac{1}{H},
\end{equation}
as stated next. It is clear that the deviation $\int_\Ga \left( 1/H - u_\nu \right) \, dS_x$ is weaker than $\nr 1/H - u_\nu \nr_{1, \Ga}$. The following theorem improves \cite[Theorem 4.8]{MP}.
 
\begin{thm}[Stability for a related overdetermined problem]
\label{thm:OBVP-stability}
%
Let $\Ga$, $\Om$, and $H$ be as in Theorem \ref{thm:stability-hk}.
%
%
Let $u$ be the solution of problem \eqref{serrin1} and $z \in \Om$ be a global minimum point of $u$.
\par
There exists a positive constant $C$ such that
\begin{equation}
\label{OBVT-stability-gener-C}
\rho_e-\rho_i\le C\, \left\lbrace \int_\Ga \left( \frac{1}{H} - u_\nu \right) \, dS_x \right\rbrace^{ \tau_N / 2 } ,
\end{equation}
%
%
with the following specifications:
\begin{enumerate}[(i)]
\item $\tau_N=1$ for $N=2$ or $3$;
\item $\tau_4$ is arbitrarily close to one, in the sense that for any $\theta>0$, there exists a positive constant $C$ such that \eqref{OBVT-stability-gener-C} holds with $\tau_4= 1- \theta $;
\item $\tau_N = 2/(N-2) $ for $N\ge 5$.
\end{enumerate}

The constant $C$ depends on $N$, $r_i$, $d_\Om$, and $\theta$ (only in the case $N=4$).
\end{thm}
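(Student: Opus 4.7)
My plan is to show that this theorem is essentially a direct consequence of Theorem \ref{thm:stability-hk}. The key observation is that the overdetermined-type deviation appearing on the right-hand side of \eqref{OBVT-stability-gener-C} is nothing but the Heintze--Karcher deficit, after one invokes the divergence theorem on $u$.

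More precisely, I would first compute the integral $\int_\Gamma u_\nu \, dS_x$. Using \eqref{serrin1} together with the divergence theorem,
\begin{equation*}
\int_\Gamma u_\nu \, dS_x = \int_\Omega \Delta u \, dx = N |\Omega|.
\end{equation*}
Subtracting this identity from $\int_\Gamma dS_x/H$ yields
\begin{equation*}
\int_\Gamma \left( \frac{1}{H} - u_\nu \right) dS_x = \int_\Gamma \frac{dS_x}{H} - N |\Omega|,
\end{equation*}
so the expression on the right-hand side of \eqref{OBVT-stability-gener-C} coincides with the Heintze--Karcher deficit. Notice that both sides of this equality are nonnegative in view of \eqref{heintze-karcher}.

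Having made this identification, I would simply invoke Theorem \ref{thm:stability-hk} with the choice of $z\in\Omega$ as the global minimum point of $u$ (which is exactly the point already singled out in the statement) and obtain the estimate \eqref{OBVT-stability-gener-C} with the exponents $\tau_N$ specified in items (i), (ii), (iii), and with the constant $C$ depending only on $N$, $r_i$, $d_\Omega$, and $\theta$ in the case $N=4$. Since the dependence of the constant and of the exponents in Theorem \ref{thm:stability-hk} is exactly the one asserted here, the proof is complete.

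There is no real obstacle: the whole argument is a one-line substitution once the Heintze--Karcher deficit has been rewritten using $\int_\Gamma u_\nu \, dS_x = N |\Omega|$. The only minor point to double-check is that the mean-convexity assumption $H \ge 0$ inherited from Theorem \ref{thm:stability-hk} (through the requirement that $\Gamma$, $\Omega$, $H$ be as in that theorem) is compatible with the integrability of $1/H$ on $\Gamma$ appearing in \eqref{OBVT-stability-gener-C}, which is implicit in the statement.
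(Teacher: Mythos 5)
Your proposal is correct and coincides with the paper's own argument: the paper derives Theorem \ref{thm:OBVP-stability} directly from Theorem \ref{thm:stability-hk} by observing, exactly as you do, that $\int_\Ga u_\nu\,dS_x = N|\Om|$ via the divergence theorem, so the deviation in \eqref{OBVT-stability-gener-C} equals the Heintze--Karcher deficit. No further work is needed.
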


%
%
%
%
%
%

\section*{Acknowledgements}
The authors wish to thank the anonymous referee, who hinted the estimate \eqref{eq:lowerbounddist} and whose suggestions contributed to a  better presentation of this article.
\par
 The paper was partially supported by the Gruppo Nazionale Analisi Matematica Probabilit\`a e Applicazioni (GNAMPA) of the Istituto Nazionale di Alta Matematica (INdAM).

\end{document}